\newtheorem{theorem}{Theorem}[section]
\newtheorem{lemma}[theorem]{Lemma}
\newtheorem{proposition}[theorem]{Proposition}
\newtheorem{corollary}[theorem]{Corollary}
\theoremstyle{definition}
\newtheorem{remark}[theorem]{Remark}
\DeclareMathOperator{\Tr}{Tr}
\DeclareMathOperator{\End}{End}
\DeclareMathOperator{\Aut}{Aut}
\DeclareMathOperator{\Hom}{Hom}
\DeclareMathOperator{\Ext}{Ext}
\DeclareMathOperator{\Der}{Der}
\DeclareMathOperator{\GL}{GL}
\DeclareMathOperator{\id}{id}
\DeclareMathOperator{\rk}{rk}
\DeclareMathOperator{\Span}{span}
\newcommand{\Sym}{\mathrm{Sym}}
\newcommand{\Harr}{\mathrm{Harr}}
\newcommand{\Mat}{\mathrm{Mat}}
\newcommand{\Alg}{\mathsf{Alg}}
\newcommand{\Leib}{\mathsf{Leib}}
\newcommand{\Lie}{\mathsf{Lie}}
\newcommand{\As}{\mathsf{As}}
\newcommand{\Comm}{\mathsf{Comm}}
\newcommand{\cO}{\mathcal{O}}
\newcommand{\kk}{\Bbbk}
\numberwithin{equation}{section}
\title{Geometry of Deformations via Incidence Varieties}
\author{Atabey Kaygun}
\email{kaygun@itu.edu.tr}
\address{Istanbul Technical University, Istanbul, Turkey}
\begin{document}

\begin{abstract}
  We provide a unified geometric realization of the classical deformation complexes.  We construct
  $\GL$-equivariant bilinear incidence varieties whose diagonal slices recover the varieties of
  associative, commutative, Leibniz, and Lie algebra structures on a finite-dimensional vector
  space. We prove that the fiber of the incidence map at a given algebra law is canonically
  isomorphic to the space of $2$-cocycles in the corresponding cohomology theory (Hochschild,
  Harrison, Leibniz, or Chevalley--Eilenberg). Furthermore, we introduce invariant bilinear forms
  to define open strata of separable and semisimple algebras, and demonstrate that these strata
  consist of open $\GL$-orbits, establishing the rigidity of generic points in the coarse moduli
  spaces for all four geometries.
\end{abstract}

\maketitle

\section*{Introduction}

We provide a new and unified geometric framework for understanding the deformation theory of
associative, Lie, and Leibniz algebras. We operate in the affine space
$V^{2,1} = \Hom(V \otimes V, V)$ of all bilinear multiplications on an $n$-dimensional $\kk$-vector
space $V$ over a field $\kk$ of characteristic 0. While the varieties $\mathrm{Alg}(V)$,
$\mathrm{Lie}(V)$, and $\mathrm{Leib}(V)$ are extensively studied within this space, we realize
these varieties as diagonal slices within bilinear incidence varieties that geometrically realize
the classical deformation complexes.

\subsubsection*{ The Associative Case}

The deformation theory of associative algebras is governed by Hochschild
cohomology~\cite{Gerstenhaber1964,NijenhuisRichardson1967}. The tangent space to the associative
variety $\mathrm{Alg}(V)$ at a point $x$ is classically identified with the space of Hochschild
$2$-cocycles, $Z^2(V_x, V_x)$, and the corresponding second cohomology group $HH^2(V_x, V_x)$
controls the moduli space and rigidity~\cite{GeissdelaPena1995}.  We provide a geometric origin for
this correspondence. We construct a $\GL(V)$-equivariant bilinear incidence variety
$\mathrm{As}(V) \subset V^{2,1} \times V^{2,1}$. We prove that the associative locus
$\mathrm{Alg}(V)$ cut out by $n^4$ quadratic equations (Proposition~\ref{prop:assoc-quadrics}) is
the diagonal slice of this larger variety (Propositions~\ref{prop:diag-equals-assoc}
and~\ref{prop:Q-rep}). We then establish that for any associative algebra $V_x$, the fiber $F_x$ of
our incidence variety is canonically isomorphic to the space of Hochschild $2$-cocycles,
$F_x \cong Z^2(V_x, V_x)$ (Proposition~\ref{prop:fiber-Z2}). This immediately and geometrically
recovers the classical result $T_x\mathrm{Alg}(V) \cong Z^2(V_x, V_x)$.

We then use this construction to study rigidity. Aguiar identified non-degeneracy of the trace form
with separability~\cite{Aguiar1998} which we use in Theorem~\ref{thm:Aguiar}. We identify the
separable locus as an open subset as $U \subset \mathrm{Alg}(V)$ using the discriminant $\Delta(x)$
of the principal trace form $T_{V_x}$, and provide an explicit formula for the dimension of the
tangent space on this locus (Theorem~\ref{thm:tangent-generic}). We then prove that the classical
cohomological condition for rigidity, $HH^2(V_x, V_x) = 0$, has a direct geometric consequence: the
isomorphism class $[V_x]$ is isolated in the coarse GIT quotient $\mathrm{Alg}(V)\sslash \GL(V)$
(Theorem~\ref{thm:rigidity-moduli}). We also show that our incidence variety admits compatible
stratifications which are indexed by Wedderburn block profiles on the separable locus
(Theorem~\ref{thm:diagonal-stratification}).

\subsubsection*{The Commutative Case}

The moduli problem in the commutative case is a refinement governed by Harrison
cohomology~\cite{Harrison1962,GerstenhaberSchack1988} or André–Quillen
cohomology~\cite{Andre1967,Quillen1970}). Our incidence construction respects this refinement. When
restricted to the commutative sublocus $\mathrm{Comm}(V)$, our incidence fibers naturally recover
the Harrison-theoretic description in terms of symmetric Hochschild cocycles. We study the open
subset $U_{\Comm}$ of finite étale algebras (cf.~\cite{Poonen2008}) defined by the non-vanishing
discriminant. We prove that this entire locus is rigid
($T_x\mathrm{Comm}(V) = T_x(\GL(V) \cdot x)$) and decomposes into a finite union of open
$\GL(V)$-orbits (Theorem~\ref{thm:rigid-UComm}). Over an algebraically closed field, this
simplifies to a single open orbit corresponding to $\kk^n$ (Corollary~\ref{cor:UComm-orbits}).

\subsubsection*{The Leibniz and Lie Cases}

We also demonstrate that our approach extends to the parallel deformation theories of Leibniz and
Lie algebras, which the literature identifies as being governed by Leibniz
cohomology~\cite{LodayPirashvili1993} and Chevalley–Eilenberg
cohomology~\cite{NijenhuisRichardson1967}, respectively.

For the right Leibniz locus $\mathrm{Leib}(V)$, also defined by $n^4$ quadratic equations
(Proposition~\ref{prop:Leibniz-quadrics}), we construct a different incidence locus defined by the
symmetrization of a bilinear map $B$. We prove that the tangent space $T_x\mathrm{Leib}(V)$ is
exactly this incidence fiber and is canonically isomorphic to the space of Leibniz $2$-cocycles,
$Z^2_{\Leib}(V_x, V_x)$ (Proposition~\ref{prop:Leib-tangent=Z2}). This provides a geometric
recovery of the full Leibniz deformation complex.

When we restrict this Leibniz incidence construction to the Lie subvariety $\mathrm{Lie}(V)$, it
specializes exactly to the Chevalley–Eilenberg complex, as shown in
Proposition~\ref{prop:tangent-Lie}. Our incidence fibers for $x \in \mathrm{Lie}(V)$ recover
$Z^2_{\mathrm{CE}}(V_x, V_x)$, with the orbit and stack tangents corresponding precisely to
$B^2_{\mathrm{CE}}$ and $H^2_{\mathrm{CE}}$, respectively, just as in the classical deformation
theory~\cite{Fialowski1986,FialowskiPenkava2005a}.

We use the Ayupov--Omirov Killing form $\kappa_R$ defined on the Leibniz
variety~\cite{AyupovOmirov1998} and show that the open locus $U_{\Leib}$ where it is non-degenerate
is exactly the semisimple Lie stratum (Theorem~\ref{thm:Killing-Leibniz}). On this locus
$U_{\Lie} \subset \mathrm{Lie}(V)$, the classical Whitehead’s Lemma ($H^2_{\mathrm{CE}} = 0$)
implies rigidity. We thus deduce the known rigidity of semisimple Lie algebras
(e.g.~\cite{Khakimdjanov2000}) as a direct consequence of our geometric framework, showing
$U_{\Lie}$ is a finite union of open, isolated orbits in the quotient (Theorem~\ref{thm:rigid-ULie}
and Corollary~\ref{cor:det-open-rigid}).

\subsubsection*{Singular strata and intersection cohomology}

On the open loci $U_{\Alg}$, $U_{\Comm}$, and $U_{\Lie}$ we considered in this paper, the
associated moduli stacks $[\Alg(V)/\GL(V)]$, $[\Comm(V)/\GL(V)]$, and $[\Lie(V)/\GL(V)]$ are
formally smooth: at every point in these loci the relevant degree–$2$ cohomology groups ($HH^2$,
$\Harr^2$, and $H^2_{\mathrm{CE}}$, respectively) vanish, the $\GL(V)$-orbit is open in the
corresponding locus and rigid in moduli. Along these smooth strata, intersection cohomology does
not carry additional information because it coincides with ordinary cohomology.

In the Leibniz case, the Ayupov–Omirov Killing form $\kappa_R$ is nondegenerate on the open subset
$U_{\Leib}\subset\Leib(V)$. By Theorem~\ref{thm:Killing-Leibniz}, this locus consists exactly of
semisimple Lie brackets.  Then on $U_{\Leib}$ the Leibniz deformation theory reduces to the
Chevalley–Eilenberg theory and $H^2_{\Leib}$ vanishes just as in the Lie case. However, viewed
inside the full Leibniz variety $\Leib(V)$, the locus $U_{\Leib}$ lies in the proper closed
subvariety $\Lie(V)$ and is therefore far from generic: for a \emph{generic} Leibniz algebra the
Leibniz cohomology $H^2_{\Leib}$ does not vanish, and the corresponding strata in
$[\Leib(V)/\GL(V)]$ remain highly singular.

Outside the formally smooth loci $U_{\Alg}$, $U_{\Comm}$, $U_{\Lie}$, and $U_{\Leib}$, the relevant
degree–$2$ cohomology may no longer vanish, deformation theory ceases to be rigid, and singular
strata appear. In this singular regime, intersection cohomology is the natural receptacle for finer
geometric invariants. In particular, the incidence variety descriptions we construct here suggest a
way to realize the intersection complexes of the moduli spaces as diagonal slices of intersection
complexes on our incidence varieties. We will return to this point of view in future work.

\subsection*{What is known}

Over an algebraically closed field, Kaygorodov and Volkov gave a complete orbit classification and
degeneration graph for two dimensional algebras~\cite{KaygorodovVolkov2019}. The moduli problem in
dimensions dimensions three and four is studied by Fialowski and
Penkava~\cite{FialowskiPenkava2009b,FialowskiPenkava2015,FialowskiPenkavaPhillipson2011}, and in
dimension five by Happel~\cite{Happel1979} and Mazzola~\cite{Mazzola1979}.

These low-dimensional classification results stand in sharp contrast to the general case. Drozd’s
tame–wild dichotomy reveals that most classification problems are as hard as classifying pairs of
matrices up to simultaneous similarity~\cite{Drozd1980}. While Gabriel's representation-theoretic
approach provides a tractable path through representations of quivers~\cite{Gabriel1972}, the
inherent complexity of the moduli problem necessitates a shift from direct classification to
studying the geometric and deformation-theoretic properties of the varieties themselves.

The bridge between the moduli problem and deformation theory for associative algebras is the
Hochschild cohomology. Gerstenhaber identified Hochschild $2$-cocycles as classifying first-order
deformations and $3$-cocycles as carrying the primary obstructions~\cite{Gerstenhaber1964}. This
algebraic machinery was placed in a broader context by Nijenhuis and Richardson by showing that
deformations can be universally encoded as Maurer–Cartan elements in an appropriate differential
graded Lie algebra (DGLA)~\cite{NijenhuisRichardson1967}, a perspective that now underlies the
modern interpretation of formal moduli via $L_\infty$-algebras.

The crucial link from this algebraic approach to a geometric one was provided by Geiss and de la
Peña, who directly related Hochschild cohomology to the smoothness and singularities of
representation schemes and algebra varieties~\cite{GeissdelaPena1995}, and more recently by Chouhy
who reconciled Gerstenhaber's formal (analytic) deformations with algebro-geometric
degenerations~\cite{Chouhy2019}. This picture can be further refined by Schaps's analysis of
idempotents~\cite{Schaps1988}, Happel's study of derived invariants~\cite{Happel1987}, and
Gerstenhaber and Schack's criteria for rigidity in families using relative Hochschild
cohomology~\cite{GerstenhaberSchack1988}.

Within this associative framework, the commutative case must be handled separately with its
additional constraint. Harrison cohomology provides such a symmetric refinement, where $2$-classes
parameterize specifically commutative deformations and $3$-classes carry the
obstruction~\cite{Harrison1962} as expected. The precise relationship was explained by
Barr~\cite{Barr1968,Barr1969} and by Gerstenhaber and Schack~\cite{GerstenhaberSchack1988} by
constructing a Hodge-type decomposition of Hochschild cohomology and recovering Harrison cohomology
as the symmetric summand. An alternative but related interpretation of commutative deformation
theory was provided by André and Quillen via the cotangent complex and André–Quillen
cohomology~\cite{Andre1967,Quillen1970}.

The geometric manifestations of these deformations in moduli spaces are schemes or stacks. For
commutative algebras, Poonen constructed an affine scheme $B_n$ parameterizing rank-$n$ algebras
with a basis to analyze the number of components in its fibers~\cite{Poonen2008}, later extended by
O’Desky and Rosen to an equivariant setting~\cite{ODeskyRosen2023}.  Muro's work, set in a
homotopical context, conceptualizes these moduli spaces as stacks over operads, proves classical
results such as the fact that unital algebras form a Zariski open substack and explicitly
identifies the tangent complexes of these stacks with Hochschild-type cohomology
groups~\cite{Muro2014}.

This entire paradigm--from geometric classification to deformations governed by cohomology then
back to the geometry of moduli--is not unique to the associative case. One can develop a direct
parallel for Lie algebras where Chevalley–Eilenberg cohomology replaces Hochschild
cohomology. Nijenhuis and Richardson had already shown there is a DGLA whose 2- and 3-cocycles
control deformations and obstructions~\cite{NijenhuisRichardson1967}. This was used very
effectively by Fialowski, Fuchs, and Penkava to construct versal deformations and stratify the
moduli varieties of low-dimensional Lie
algebras~\cite{Fialowski1986,FialowskiFuchs1999,BodinFialowskiPenkava2005,FialowskiPenkava2007a,FialowskiPenkava2008a}. These
methods interface perfectly with the geometric study of varieties of Lie algebra laws, where
cohomological invariants are used to identify components and analyze
rigidity~\cite{GrunewaldOHalloran1988,Burde1999,Khakimdjanov2000}.

This framework extends to Leibniz algebras where we have a cohomology that controls the
infinitesimal deformations, just as its counterparts do~\cite{LodayPirashvili1993}. Consequently,
the geometric varieties of Leibniz laws have been analyzed in close analogy with the Lie setting,
using cohomological tools to understand their reducibility, components, and
degenerations~\cite{AncocheaBMS2007,Albeverio2006,AyupovOmirovRakhimov2020}.

\subsection*{Plan of the article}

In Section~\ref{sec:Associative}, we investigate the associative case. We construct the incidence
variety $\As(V)$, identify its diagonal with the classical variety $\Alg(V)$, and prove that its
fibers are isomorphic to spaces of Hochschild $2$-cocycles. We then analyze the separable locus
using the principal trace form and establish rigidity results for the moduli stack. We conclude the
section by refining these results for commutative algebras via Harrison cohomology.

In Section~\ref{sec:Leibniz}, we extend this framework to Leibniz and Lie algebras. We construct
the appropriate incidence variety for the Leibniz locus $\Leib(V)$ and identify its fibers with
Leibniz $2$-cocycles. We use the Killing form to characterize the semisimple locus and prove that
these algebras correspond to open orbits. Finally, we restrict these constructions to the Lie
subvariety, recovering the relationship between Chevalley--Eilenberg cohomology and the rigidity of
semisimple Lie algebras.

\section{The Associative Case}\label{sec:Associative}

\subsection{The quadratic associativity locus}

Let $V$ be a $\kk$-vector space of dimension $n$ with a fixed basis $(e_i)_{i=1}^n$.  The space
\[
  V^{2,1}:=\Hom(V\otimes V,V)\cong \kk^{n^3}
\]
parameterizes all bilinear products on $V$.  Each $x\in V^{2,1}$ determines a bilinear
multiplication $\mu_x\colon V\otimes V\to V$ by
\[
  \mu_x(e_i,e_j)=\sum_{\ell=1}^n x_{i,j}^{\ell}e_\ell,
\]
and conversely any bilinear product arises uniquely in this way.\footnote{Please note that the
  index $\ell$ in $x_{i,j}^\ell$ is just another index, and does not indicate an exponent.}

\begin{proposition}\label{prop:assoc-quadrics}
  Let $V^{3,1}:=\Hom(V^{\otimes 3},V)$ and define $\Phi\colon V^{2,1}\to V^{3,1}$ by
  \[
    \Phi(x)(a,b,c):=\mu_x(\mu_x(a,b),c)-\mu_x(a,\mu_x(b,c)).
  \]
  The associativity locus
  \[
    \Alg(V):=\{x\in V^{2,1}:\Phi(x)=0\}
  \]
  is a closed subscheme cut out by $n^4$ quadratic polynomials
  \[
    F^m_{i,j,k}(x)=\sum_{\ell=1}^n x_{i,j}^{\ell}x_{\ell,k}^m-\sum_{\ell=1}^n x_{i,\ell}^m x_{j,k}^{\ell}.
  \]
\end{proposition}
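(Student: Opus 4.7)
The plan is to reduce to a coordinate computation in the fixed basis $(e_i)$ and exhibit the $F^m_{i,j,k}$ as the components of the polynomial map $\Phi\colon V^{2,1}\to V^{3,1}$ relative to the natural bases of source and target.

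First, I would observe that $\Phi(x)\in V^{3,1}=\Hom(V^{\otimes 3},V)$ is a trilinear map, so it is determined by its values on the basis triples $(e_i,e_j,e_k)$; each such value $\Phi(x)(e_i,e_j,e_k)\in V$ is in turn determined by its coordinates in the basis $(e_m)$. Consequently, $\Phi(x)=0$ is equivalent to the simultaneous vanishing of the $n^4$ scalar quantities obtained by expanding $\Phi(x)(e_i,e_j,e_k)$ in $(e_m)$, indexed by $(i,j,k,m)\in\{1,\dots,n\}^4$.

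Next, I would unfold each of the two associator terms using the structure constants. From $\mu_x(e_a,e_b)=\sum_\ell x_{a,b}^\ell e_\ell$ one obtains
\[
  \mu_x(\mu_x(e_i,e_j),e_k)=\sum_{\ell}x_{i,j}^{\ell}\,\mu_x(e_\ell,e_k)=\sum_{\ell,m}x_{i,j}^{\ell}x_{\ell,k}^{m}\,e_m,
\]
and symmetrically
\[
  \mu_x(e_i,\mu_x(e_j,e_k))=\sum_{\ell}x_{j,k}^{\ell}\,\mu_x(e_i,e_\ell)=\sum_{\ell,m}x_{i,\ell}^{m}x_{j,k}^{\ell}\,e_m.
\]
Subtracting and reading off the coefficient of $e_m$ yields exactly the claimed expression
\[
  F^m_{i,j,k}(x)=\sum_\ell x_{i,j}^\ell x_{\ell,k}^m-\sum_\ell x_{i,\ell}^m x_{j,k}^\ell.
\]
Each such polynomial is manifestly homogeneous of degree $2$ in the coordinate functions $\{x_{i,j}^\ell\}$ on $V^{2,1}\cong\kk^{n^3}$, and the index range $\{1,\dots,n\}^4$ produces exactly $n^4$ of them.

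Scheme-theoretically, the ideal $I(\Alg(V))\subset\kk[x_{i,j}^\ell]$ is by definition generated by the components of $\Phi$ in these bases, so $\Alg(V)$ is cut out in $V^{2,1}$ precisely by the $n^4$ quadrics $F^m_{i,j,k}$. There is no genuine obstacle here; the only point requiring care is the bookkeeping of summation indices, namely ensuring that the internal dummy $\ell$ is correctly contracted against the external indices $i,j,k,m$ in the orders dictated by left- and right-normed associator bracketing.
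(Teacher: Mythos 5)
Your computation is correct and is exactly the routine coordinate expansion the paper has in mind (the paper in fact omits the proof of this proposition as immediate); expanding both associator terms in the basis $(e_m)$ and reading off coefficients yields precisely the stated quadrics $F^m_{i,j,k}$. No gaps.
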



\subsection{Bilinearization and its symmetries}\label{sec:bilinearization}

Define a bilinear map $\beta\colon V^{2,1}\times V^{2,1}\to V^{3,1}$ by
\begin{equation}\label{eq:beta}
  \beta(x,y)(a,b,c):=\mu_x(\mu_y(a,b),c)-\mu_x(a,\mu_y(b,c)).
\end{equation}
In coordinates this reads
\[
  \beta^m_{i,j,k}(x,y)
  =\sum_{\ell} x^{\ell}_{i,j} y^m_{\ell,k}-\sum_{\ell} y^m_{i,\ell}x^{\ell}_{j,k}.
\]
By definition $\Phi(x)=\beta(x,x)$.  

For each $\varphi\in (V^{3,1})^\vee$ there exists a matrix $q_\varphi\in \Mat_{n^3}(\kk)$ such that
\[
  \langle \varphi,\beta(x,y)\rangle  =  x^{t} q_\varphi y
\]
where $x,y$ are viewed as column vectors of length $n^3$ collecting the structure constants.  Let
$Q_{\As}$ denote the linear span of $\{q_\varphi:\varphi\in (V^{3,1})^\vee\}$ in $\Mat_{n^3}(\kk)$.
Since the characteristic of $\kk$ is $0$, the skew-symmetric part of $q_\varphi$ does not
contribute to the diagonal values $x^{t}q_\varphi x$, so we can replace $Q_{\As}$ by its symmetric
part
\[
  Q_{\As}^{\rm sym}=\Span\{\tfrac12(q+q^{t}):q\in Q_{\As}\}
\]
without changing the equations on the diagonal.  We therefore define the bilinear incidence
subvariety
\[
  \As(V) := \{(x,y)\in V^{2,1}\times V^{2,1}: x^{t}qy=0 \text{ for all } q\in Q_{\As}^{\rm sym}\}.
\]

\begin{proposition}\label{prop:diag-equals-assoc}
  The diagonal $\Delta=\{(x,x):x\in V^{2,1}\}$ satisfies
  \[
    \Alg(V) = \{x\in V^{2,1}: (x,x)\in \As(V)\}.
  \]
\end{proposition}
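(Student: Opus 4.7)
The plan is to unwind the definitions on both sides and show that the two defining conditions become the same scalar system of quadratic equations in the coordinates of $x$.

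First I would restate $\Alg(V)$ in bilinear form. By Proposition~\ref{prop:assoc-quadrics} we have $x\in\Alg(V)$ iff $\Phi(x)=0$, and by construction $\Phi(x)=\beta(x,x)$. Since $V^{3,1}$ is finite-dimensional, the dual $(V^{3,1})^\vee$ separates points, so $\beta(x,x)=0$ is equivalent to $\langle\varphi,\beta(x,x)\rangle=0$ for every $\varphi\in(V^{3,1})^\vee$.

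Next I would use the defining property of the matrices $q_\varphi$ to rewrite these scalar equations. Setting $y=x$ in $\langle\varphi,\beta(x,y)\rangle=x^{t}q_\varphi y$ gives $\langle\varphi,\Phi(x)\rangle=x^{t}q_\varphi x$. Hence $x\in\Alg(V)$ iff $x^{t}q x=0$ for every $q$ in the spanning set $\{q_\varphi\}$, and therefore for every $q\in Q_{\As}$ by linearity in $q$.

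The passage from $Q_{\As}$ to $Q_{\As}^{\rm sym}$ is where the only subtlety lies, and this is where I expect the care to be needed. Writing $q=\tfrac12(q+q^{t})+\tfrac12(q-q^{t})$, the scalar $x^{t}qx$ equals $x^{t}\cdot\tfrac12(q+q^{t})\cdot x$ because the skew part contributes $\tfrac12 x^{t}(q-q^{t})x=0$ (this uses characteristic zero, as noted in the text). Thus the condition $x^{t}qx=0$ for all $q\in Q_{\As}$ is equivalent to $x^{t}qx=0$ for all $q\in Q_{\As}^{\rm sym}$, which is by definition the statement $(x,x)\in\As(V)$.

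Combining these equivalences in a single chain yields $\Alg(V)=\{x\in V^{2,1}:(x,x)\in\As(V)\}$, proving the proposition. The main (minor) obstacle is the symmetric-part reduction, which is purely formal once characteristic zero is invoked; the rest is bookkeeping between the vectorial statement $\beta(x,x)=0$ and the family of scalar equations $x^{t}qx=0$.
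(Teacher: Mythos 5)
Your proposal is correct and follows essentially the same route as the paper: rewrite $\Phi(x)=\beta(x,x)$, pair against functionals $\varphi$ to get the scalar equations $x^{t}q_\varphi x=0$, and observe that the skew part of each $q$ contributes nothing on the diagonal so that $Q_{\As}$ may be replaced by $Q_{\As}^{\rm sym}$. Your treatment of the symmetric-part reduction is, if anything, slightly more explicit than the paper's one-line remark.
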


\begin{proof}
  For each $\varphi\in (V^{3,1})^\vee$ we have
  \[
    F_\varphi(x)
    := \langle \varphi,\Phi(x) \rangle
    = \langle \varphi,\beta(x,x) \rangle
    = x^{t}q_\varphi x.
  \]
  The ideal of $\Alg(V)$ is generated by the polynomials $F_\varphi$ as $\varphi$ ranges over a
  basis of $(V^{3,1})^\vee$.  Thus $\Phi(x)=0$ if and only if $x^{t}q_\varphi x=0$ for all
  $\varphi$, which is equivalent to $(x,x)\in \As(V)$.  Since skew-symmetric matrices vanish on the
  diagonal, this condition depends only on the symmetric part of $Q_{\As}$.
\end{proof}

\begin{proposition}\label{prop:Q-rep}
  The space $Q_{\As}^{\rm sym}$ is a $\GL(V)$-subrepresentation of $\Sym^2((V^{2,1})^\vee)$ under
  the natural $\GL(V)$-action on $V^{2,1}$ and $V^{3,1}$.
\end{proposition}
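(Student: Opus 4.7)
The plan is to lift the statement to the more fundamental object, the bilinearization $\beta$, and show first that $\beta$ is $\GL(V)$-equivariant; the claim about $Q_{\As}^{\rm sym}$ will then be a formal consequence. Throughout, I use the natural transport-of-structure actions: $(g\cdot x)$ corresponds to $\mu_{g\cdot x}=g\circ\mu_x\circ(g^{-1}\otimes g^{-1})$, with the analogous formula on $V^{3,1}$.

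The first step is to verify the identity
\[
  \beta(g\cdot x,\, g\cdot y) \;=\; g\cdot \beta(x,y)
  \qquad \text{for all } g\in\GL(V),\ x,y\in V^{2,1}.
\]
This is a direct unwinding of the defining formula~\eqref{eq:beta}: the two inner $g^{-1}$'s coming out of $\mu_{g\cdot y}$ cancel against the $g$'s inside $\mu_{g\cdot x}$, leaving an overall outer $g$ and three $g^{-1}$'s acting on the inputs $a,b,c$, which is precisely the $\GL(V)$-action on $V^{3,1}$. So $\beta$ is an equivariant bilinear map between $\GL(V)$-representations.

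The second step is to translate this into a statement about the matrices $q_\varphi$. Viewing $x,y$ as vectors in $V^{2,1}$ and writing $\rho\colon\GL(V)\to\GL(V^{2,1})$ for the representation, the relation $\langle\varphi,\beta(x,y)\rangle=x^{t}q_\varphi y$ together with equivariance of $\beta$ gives, for the contragredient action $(g\cdot\varphi)(\xi):=\varphi(g^{-1}\cdot\xi)$ on $(V^{3,1})^\vee$,
\[
  x^{t} q_{g\cdot\varphi}\, y
  \;=\;\langle g\cdot\varphi,\beta(x,y)\rangle
  \;=\;\langle\varphi,\beta(g^{-1}\cdot x,g^{-1}\cdot y)\rangle
  \;=\;x^{t}\,\rho(g^{-1})^{t}\, q_\varphi\, \rho(g^{-1})\, y.
\]
Hence $q_{g\cdot\varphi}=\rho(g^{-1})^{t}q_\varphi\rho(g^{-1})$, which is exactly the natural $\GL(V)$-action on $(V^{2,1})^\vee\otimes(V^{2,1})^\vee$. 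Therefore the linear map $\varphi\mapsto q_\varphi$ is $\GL(V)$-equivariant, and its image $Q_{\As}$ is a $\GL(V)$-subrepresentation of $(V^{2,1})^\vee\otimes(V^{2,1})^\vee$.

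The final step is to pass to the symmetric part. The involution $q\mapsto q^{t}$ intertwines the action $q\mapsto\rho(g^{-1})^{t}q\rho(g^{-1})$ with itself, so the symmetrization $q\mapsto\tfrac12(q+q^{t})$ is $\GL(V)$-equivariant and maps $Q_{\As}$ onto $Q_{\As}^{\rm sym}$ inside $\Sym^{2}((V^{2,1})^\vee)$. This yields the claim. The only real point requiring care is the bookkeeping in the second step, i.e.\ matching the contragredient action on $(V^{3,1})^\vee$ with the conjugation action on matrices in $\Mat_{n^3}(\kk)$; everything else is formal once equivariance of $\beta$ is in hand.
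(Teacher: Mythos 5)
Your proposal is correct and follows essentially the same route as the paper: establish equivariance of $\beta$, dualize to obtain the congruence law $q_{g\cdot\varphi}=\rho(g^{-1})^{t}q_\varphi\rho(g^{-1})$, and observe that symmetrization commutes with this action. The only cosmetic difference is that you apply the contragredient action directly to $\langle g\cdot\varphi,\beta(x,y)\rangle$ whereas the paper evaluates $\langle g\cdot\varphi,\beta(g\cdot x,g\cdot y)\rangle$; these are equivalent bookkeeping choices.
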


\begin{proof}
  The group $\GL(V)$ acts on $V^{2,1} = \Hom(V^{\otimes 2},V)$ and on $V^{3,1} = \Hom(V^{\otimes 3},V)$
  by transport of structure.  Concretely, for $g\in \GL(V)$ and $\mu\in V^{2,1}$ one has
  \[
    (g\cdot\mu)(a,b) = g \mu(g^{-1}a,g^{-1}b),
  \]
  and an analogous formula holds in arity $3$.  A direct computation shows that $\beta$ is
  $\GL(V)$-equivariant:
  \[
    \beta(g\cdot x,g\cdot y) = g\cdot\beta(x,y)
  \]
  for all $g,x,y$.  Dualizing, we obtain
  \[
    \langle g\cdot \varphi,\beta(g\cdot x,g\cdot y) \rangle
    = \langle \varphi,\beta(x,y) \rangle
    =x^{t}q_\varphi y.
  \]
  On the other hand the left-hand side is equal to
  \[
    \langle g\cdot \varphi, \beta(g\cdot x,g\cdot y) \rangle
    = (g\cdot x)^{t} q_{g\cdot \varphi} (g\cdot y).
  \]
  Writing the action on $V^{2,1}$ in coordinates shows that this implies a congruence relation
  \[
    q_{g\cdot\varphi} = (g^{-1})^{t} q_\varphi g^{-1}.
  \]
  Thus the span of the matrices $q_\varphi$ is stable under the induced action of $\GL(V)$ inside
  $\End(V^{2,1})$, and this action preserves symmetry.  Therefore $Q_{\As}^{\rm sym}$ is a
  $\GL(V)$-subrepresentation of $\Sym^2((V^{2,1})^\vee)$.
\end{proof}

\subsection{The incidence as a sheaf and its cohomological fiber}

Let $X : =V^{2,1}$ with coordinates $x$.  Choose a basis $\{q_1,\dots,q_M\}$ of
$Q_{\As}^{\rm sym}$.  Define a morphism of vector bundles
\[
  \Psi\colon  \cO_X^{\oplus n^3}\longrightarrow \cO_X^{\oplus M}
\]
by sending a section $y$ to the tuple
\[
  \Psi(y) = (x^{t} q_1 y,\dots,x^{t} q_{M} y),
\]
where each entry is a linear polynomial in the coordinates of $x$ and in the fiber coordinates $y$.
Set $\mathcal{K}:=\ker(\Psi)$.  For $x\in X$ the fiber of $\mathcal{K}$ at $x$ is
\[
  \mathcal{K}(x) = \{y\in V^{2,1}: x^{t} qy=0 \text{ for all } q\in Q_{\As}^{\rm sym}\}=:F_x,
\]
and the total space of $\mathcal{K}$ identifies with the incidence variety $\As(V)$.

We next relate $F_x$ to the Zariski tangent space of $\Alg(V)$ and to Hochschild cohomology.

\begin{proposition}\label{prop:fiber-Z2}
  Let $x\in \Alg(V)$ and let $V_x:=(V,\mu_x)$.  Then the Zariski tangent space of $\Alg(V)$ at $x$
  coincides with $F_x$ and with the space of Hochschild $2$-cocycles:
  \[
    T_x\Alg(V) = F_x = Z^2(V_x,V_x).
  \]
\end{proposition}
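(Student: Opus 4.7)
The plan is to verify the two equalities $T_x\Alg(V)=F_x$ and $F_x=Z^2(V_x,V_x)$ separately, the first by a Jacobian computation on the defining quadrics $F_\varphi(x)=x^{t}q_\varphi x$, and the second by translating the matrix condition back into the $\beta$-language and matching it with the Hochschild coboundary formula.

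For the first equality, I would differentiate the generators of the ideal of $\Alg(V)$ coming from Proposition~\ref{prop:diag-equals-assoc}. For each $\varphi\in(V^{3,1})^\vee$ the polynomial $F_\varphi(x)=x^{t}q_\varphi x$ has directional derivative
\[
  dF_\varphi|_x(y)=y^{t}q_\varphi x+x^{t}q_\varphi y=x^{t}(q_\varphi+q_\varphi^{t})y=2\,x^{t}q_\varphi^{\mathrm{sym}}y,
\]
using that $y^{t}q_\varphi x$ is a scalar and equals its own transpose. As $\varphi$ varies over a basis of $(V^{3,1})^\vee$ the matrices $q_\varphi^{\mathrm{sym}}$ span $Q_{\As}^{\mathrm{sym}}$, so the Zariski tangent space is exactly the set of $y$ annihilated by every $q\in Q_{\As}^{\mathrm{sym}}$ against $x$. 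This is, by definition, the fiber $F_x$ of $\mathcal{K}$ at $x$, yielding $T_x\Alg(V)=F_x$.

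For the second equality, I would undo the pairing $\langle\varphi,\beta(x,y)\rangle=x^{t}q_\varphi y$ to rewrite the membership $y\in F_x$ as the intrinsic condition
\[
  \beta(x,y)+\beta(y,x)=0\quad\text{in }V^{3,1},
\]
since symmetrizing $q_\varphi$ symmetrizes $\beta$ in its two arguments and $\varphi$ was arbitrary. Now substituting the formula \eqref{eq:beta}, writing $\mu:=\mu_x$ and $f:=\mu_y$ (the cochain represented by $y$), the sum becomes
\[
  \mu(f(a,b),c)-\mu(a,f(b,c))+f(\mu(a,b),c)-f(a,\mu(b,c)).
\]
This is precisely $-(\delta f)(a,b,c)$, where $\delta\colon C^2(V_x,V_x)\to C^3(V_x,V_x)$ is the Hochschild coboundary
\[
  (\delta f)(a,b,c)=\mu(a,f(b,c))-f(\mu(a,b),c)+f(a,\mu(b,c))-\mu(f(a,b),c).
\]
Hence $y\in F_x$ if and only if $\delta f=0$, i.e.\ $f\in Z^2(V_x,V_x)$, giving $F_x=Z^2(V_x,V_x)$ and completing the chain of identifications.

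There is no real obstacle in this argument beyond careful bookkeeping: the derivative computation is standard for quadratic varieties, and the cohomological identification is essentially tautological once one recognizes that $\beta$ was designed to encode the associator pairing that defines the Hochschild differential. The only point that needs care is to make sure the symmetrization $q_\varphi\mapsto q_\varphi^{\mathrm{sym}}$ in the definition of $F_x$ matches the symmetrization $\beta(x,y)+\beta(y,x)$ in the cocycle condition, and that the resulting sign and ordering conventions agree with the chosen sign convention for $\delta$; this is what forces the use of $\mathrm{char}(\kk)=0$ noted in Section~\ref{sec:bilinearization}.
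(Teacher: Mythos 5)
Your proposal is correct and follows essentially the same route as the paper: a Jacobian computation on the quadrics $F_\varphi(x)=x^{t}q_\varphi x$ to get $T_x\Alg(V)=F_x$, followed by translating $y\in F_x$ into $\beta(x,y)+\beta(y,x)=0$ and recognizing this as (a sign times) the Hochschild coboundary of the $2$-cochain $y$. The only cosmetic difference is that the paper phrases the second step via the Gerstenhaber bracket $d_{\mu_x}(y)=[\mu_x,y]$, while you match the coboundary formula term by term; your explicit handling of the symmetrization $q_\varphi\mapsto q_\varphi^{\mathrm{sym}}$ is if anything slightly more careful than the paper's.
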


\begin{proof}
  First we identify $F_x$ with the Zariski tangent space.  The scheme $\Alg(V)$ is defined by the
  vanishing of the quadratic polynomials
  \[
    F_\varphi(x) : =\langle \varphi,\Phi(x) \rangle = x^{t}q_\varphi x
  \]
  as $\varphi$ ranges over a basis of $V(^{3,1})^\vee$.  For $y\in V^{2,1}$ the directional
  derivative of $F_\varphi$ at $x$ in the direction $y$ is
  \[
    \mathrm{d}F_\varphi(x)[y]
    = \left.\frac{\mathrm{d}}{\mathrm{d}t}\right|_{t=0} (x+ty)^{t}q_\varphi(x+ty)
    = x^{t}q_\varphi y + y^{t}q_\varphi x
    = 2 x^{t}q_\varphi y,
  \]
  because $q_\varphi$ is symmetric and the characteristic $\kk$ is $0$.  Hence
  \[
    T_x\Alg(V)
    = \bigcap_{\varphi} \ker \left(\mathrm{d} F_\varphi(x) \right)
    = \{y\in V^{2,1} : x^{t}q_\varphi y=0 \text{ for all } \varphi \}
    =F_x.
  \]
  
  We now identify $F_x$ with the space of Hochschild $2$-cocycles.  Let $C^\ast(V_x,V_x)$ denote
  the Hochschild cochain complex, and let $d_{\mu_x}\colon C^2\to C^3$ be the Hochschild
  differential.  By definition of the Gerstenhaber bracket, the differential satisfies
  \[
    d_{\mu_x}(y)=[\mu_x,y]
  \]
  for any $2$-cochain $y\in C^2(V_x,V_x)$.  A direct expansion of the insertion operations in the
  bracket shows that
  \[
    [\mu_x,y](a,b,c)
    = \mu_x(y(a,b),c) - \mu_x(a,y(b,c))
    + y(\mu_x(a,b),c) - y(a,\mu_x(b,c))
  \]
  for all $a,b,c\in V$, which we may write as
  \[
    [\mu_x,y] = \beta(x,y) + \beta(y,x).
  \]
  Thus $d_{\mu_x}(y) = 0$ if and only if $\beta(x,y) + \beta(y,x) = 0$.  For each
  $\varphi\in (V^{3,1})^\vee$ we have
  \[
    \langle \varphi, \beta(x,y) + \beta(y,x) \rangle
    = \langle \varphi, \beta(x,y) \rangle + \langle \varphi, \beta(y,x) \rangle
    = x^{t}q_\varphi y + y^{t}q_\varphi x
    = 2 x^{t}q_\varphi y.
  \]
  Therefore $d_{\mu_x}(y)=0$ if and only if $x^{t}q_\varphi y=0$ for all $\varphi$, which is
  equivalent to $y\in F_x$.  Hence
  \[
    F_x = \ker d_{\mu_x} = Z^2(V_x,V_x),
  \]
  as claimed.
\end{proof}

\subsection{The principal trace form and semisimplicity}

For $x\in \Alg(V)$, define the principal trace form $T_x\colon V\times V\to \kk$ by
\[
  T_x(a,b) := \Tr(L_{ab}),
\]
where $L_z\colon V_x\to V_x$ is left multiplication by $z$.  Let $D_x = (\Tr(L_{e_ie_j}))_{i,j}$ be
the Gram matrix of $T_x$ in the basis $(e_i)$ and set $\Delta(x) := \det D_x$.

\begin{theorem}[{\cite[Thm.~3.1 and Cor.~3.1]{Aguiar1998}}]\label{thm:Aguiar}
  For a finite-dimensional algebra $A$ over a field of characteristic $0$, the principal trace form
  $T_A$ is nondegenerate if and only if $A$ is semisimple.
\end{theorem}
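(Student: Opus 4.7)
The plan is to prove both implications using the Wedderburn--Artin structure theory for finite-dimensional associative algebras in characteristic zero. For concreteness I will assume $A$ is unital; the non-unital case reduces to this by adjoining a unit and checking that neither semisimplicity nor the principal trace form is substantively affected.

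For the direction that \emph{$A$ is not semisimple implies $T_A$ is degenerate}, I would argue via the Jacobson radical $J := J(A)$, which is nonzero by assumption. Since $J$ is a nilpotent two-sided ideal in a finite-dimensional algebra, there exists $k$ with $J^k = 0$. For any $a \in J$ and any $b \in A$ one has $ab \in J$ and hence $(ab)^k = 0$; by associativity $L_{ab}^k = L_{(ab)^k} = 0$, so $L_{ab}$ is nilpotent and $T_A(a,b) = \Tr(L_{ab}) = 0$. Every nonzero element of $J$ therefore lies in the radical of $T_A$, witnessing degeneracy.

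For the converse, I would apply the structure theorem to write the semisimple $A$ as a product $A \cong \prod_{i=1}^{r} M_{n_i}(D_i)$, where each $D_i$ is a finite-dimensional division algebra over $\kk$. The regular representation splits along this product decomposition, so $T_A$ itself decomposes as an orthogonal direct sum of principal trace forms on the simple factors, and it suffices to prove nondegeneracy on each $M_n(D)$. Viewing $M_n(D)$ as a left module over itself, it is isomorphic to $n$ copies of the standard module $D^n$, so the regular trace is $n$ times the trace of left multiplication on $D^n$. Extending scalars to a splitting field $\bar{\kk}$ of $D$, where $M_n(D) \otimes_\kk \bar{\kk} \cong M_N(\bar{\kk})$ for an appropriate $N$, the form $T_A \otimes \bar{\kk}$ becomes a nonzero scalar multiple of the classical matrix trace pairing $(a,b) \mapsto \mathrm{tr}(ab)$ on $M_N(\bar{\kk})$, which is well known to be nondegenerate. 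Since nondegeneracy of the Gram determinant is preserved under faithfully flat base change, $T_A$ is nondegenerate already over $\kk$.

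The principal technical obstacle is the identification of $\Tr(L_z)$ with a nonzero multiple of the reduced trace on each simple factor when $D$ is genuinely noncommutative; this is handled cleanly by the base change above, but one must verify that the scalar relating the regular trace to the matrix trace pairing is nonzero in $\kk$, and this is precisely where the characteristic zero hypothesis enters. Characteristic zero is also used in the first direction to guarantee that the Jacobson radical is nilpotent, so that the trace vanishing argument applies. Once these ingredients are in place, the proof reduces to a straightforward assembly of the Wedderburn decomposition, the computation of the regular-representation trace, and the elementary fact that nilpotent operators have trace zero.
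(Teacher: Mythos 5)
The paper does not prove this statement at all: it is quoted verbatim from Aguiar with a citation to \cite[Thm.~3.1 and Cor.~3.1]{Aguiar1998}, so there is no internal proof to compare against. Your argument is a correct and standard self-contained proof via Wedderburn--Artin theory, and both implications are sound: nilpotence of $L_{ab}$ for $a$ in the radical gives degeneracy, and the orthogonal decomposition of $T_A$ along the simple factors plus base change reduces the converse to the nondegeneracy of $(a,b)\mapsto N\,\mathrm{tr}(ab)$ on $M_N(\bar\kk)$. Two small corrections. First, the nilpotency of the Jacobson radical of a finite-dimensional algebra holds in every characteristic; characteristic zero plays no role in your first direction. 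It is needed only in the converse, both to ensure the center of each division algebra is a \emph{separable} extension of $\kk$ and to guarantee that the scalar $N$ relating $\Tr(L_z)$ to the matrix trace is nonzero (over $\mathbb{F}_p$ the form on $M_p(\mathbb{F}_p)$ is identically zero). Second, if $D$ has center $K\supsetneq\kk$ then $M_n(D)\otimes_\kk\bar\kk$ is not a single matrix algebra but a product $\prod_{j=1}^{[K:\kk]}M_{nd}(\bar\kk)$, one factor for each embedding $K\hookrightarrow\bar\kk$; this does not harm the argument, since the extended form is again an orthogonal sum of nondegenerate trace pairings and the Gram determinant in a fixed $\kk$-basis is unchanged by extension of scalars, but the statement as you wrote it should be amended.
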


Over a field of characteristic $0$, a finite-dimensional algebra is semisimple if and only if it is separable.  The open subset
\[
  U := \{ x\in \Alg(V): \Delta(x)\neq 0 \}
\]
therefore consists precisely of the separable algebras.  For any irreducible component $Z$ of
$\Alg(V)$ that meets $U$, the intersection $U\cap Z$ is a nonempty Zariski-open subset of $Z$ and
hence is dense in $Z$.

\subsection{Tangent spaces, rigidity, and moduli}

We now collect the main geometric consequences of the constructions above.

\begin{theorem}\label{thm:tangent-generic}
  Let $U\subset \Alg(V)$ be the open separable locus.  For $x\in U$, the Zariski tangent space satisfies
  \[
    T_x\Alg(V) \cong Z^2(V_x,V_x)
  \]
  and
  \[
    \dim_\kk T_x\Alg(V)  =  n^2-\dim_\kk\Der_\kk(V_x)  =  n^2-n+\dim_\kk Z(V_x).
  \]
  When $\kk$ is algebraically closed and
  \[
    V_x\cong \bigoplus_{s=1}^k M_{r_s}(\kk)
    \qquad\text{with}\qquad
    \sum_s r_s^2=n,
  \]
  this dimension is equal to $n^2 - n + k$.
\end{theorem}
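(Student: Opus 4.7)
The isomorphism $T_x\Alg(V)\cong Z^2(V_x,V_x)$ is not something I need to re-prove: it is exactly the statement of Proposition~\ref{prop:fiber-Z2} combined with the identification of the fiber $F_x$ with the Zariski tangent space. So the plan is to reduce everything to a dimension count on the Hochschild complex, then invoke separability (via Theorem~\ref{thm:Aguiar}) to kill the obstructions that normally prevent such a count from closing.

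The core calculation uses the low-degree part of the Hochschild cochain complex
\[
  C^0(V_x,V_x)=V_x,\qquad C^1=\End_\kk(V_x),\qquad C^2=V^{2,1},
\]
with differentials $d^0,d^1,d^2$. From the definitions one has $Z^0=Z(V_x)$, $Z^1=\Der_\kk(V_x)$, $B^1=\mathrm{Inn}(V_x)$, and the tautological exact sequences
\[
  0\to Z(V_x)\to V_x\to B^1\to 0,\qquad 0\to \Der_\kk(V_x)\to \End_\kk(V_x)\to B^2\to 0,\qquad 0\to B^2\to Z^2\to HH^2(V_x,V_x)\to 0.
\]
The essential input is that on the open separable locus $U$ we have $HH^n(V_x,V_x)=0$ for all $n\ge 1$. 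This is the classical vanishing theorem for separable algebras in characteristic zero, which by Theorem~\ref{thm:Aguiar} is precisely the condition $\Delta(x)\ne 0$ cutting out $U$. I would cite this as the key external input; the Whitehead-type argument proceeds by constructing a separability idempotent for $V_x\otimes V_x^{\mathrm{op}}$ and using it to build a contracting homotopy on the bar complex.

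Granted $HH^1=HH^2=0$ on $U$, the first exact sequence gives $\dim B^1=n-\dim Z(V_x)$, so $\Der_\kk(V_x)=B^1$ forces $\dim \Der_\kk(V_x)=n-\dim Z(V_x)$. The second gives $\dim B^2=n^2-\dim \Der_\kk(V_x)$, and the third gives $\dim Z^2=\dim B^2$. Assembling these yields both forms of the dimension formula:
\[
  \dim_\kk T_x\Alg(V)=\dim_\kk Z^2(V_x,V_x)=n^2-\dim_\kk\Der_\kk(V_x)=n^2-n+\dim_\kk Z(V_x).
\]
For the last assertion, over an algebraically closed field the Wedderburn--Artin theorem provides $V_x\cong\bigoplus_{s=1}^k M_{r_s}(\kk)$, and since the center of $M_{r_s}(\kk)$ is the one-dimensional scalar subalgebra, $\dim_\kk Z(V_x)=k$, giving $n^2-n+k$.

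The main obstacle is really just the justification of $HH^{\ge 1}=0$ on the separable locus, since everything else is linear algebra on the three short exact sequences. One could avoid a direct citation by working in the bimodule picture: separability of $V_x$ means $V_x$ is projective as a $V_x^e=V_x\otimes V_x^{\mathrm{op}}$-module, which by standard $\Ext^n_{V_x^e}(V_x,V_x)=HH^n(V_x,V_x)$ gives the vanishing. I would take this route since it is self-contained and exactly matches the hypothesis supplied by Theorem~\ref{thm:Aguiar}, and then wrap up with the Wedderburn decomposition to obtain the explicit formula $n^2-n+k$.
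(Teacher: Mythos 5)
Your proposal is correct and follows essentially the same route as the paper: both reduce to Proposition~\ref{prop:fiber-Z2}, derive $HH^{m}(V_x,V_x)=0$ for $m>0$ from projectivity of $V_x$ over $V_x^{e}$, and then run the same rank--nullity count (your three short exact sequences are just the paper's computations $\dim Z^2=\dim B^2=n^2-\dim\Der_\kk(V_x)$ and $\dim\Der_\kk(V_x)=n-\dim Z(V_x)$ repackaged), finishing with Wedderburn--Artin for the algebraically closed case. No gaps.
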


\begin{proof}
  Proposition~\ref{prop:fiber-Z2} gives $T_x\Alg(V)\cong Z^2(V_x,V_x)$.  Since $V_x$ is separable,
  $V_x$ is projective as a module over its enveloping algebra
  $V_x^e := V_x\otimes_\kk V_x^{\mathrm{op}}$.  Hence by \cite[Prop.~9.1.10]{Weibel1994}
  \[
    HH^m(V_x,V_x)\cong \Ext_{V_x^e}^m(V_x,V_x) = 0
  \]
  for all $m>0$.  In particular $HH^1(V_x,V_x) = 0$ and $HH^2(V_x,V_x) = 0$.

  The Hochschild complex in degrees $1$ and $2$ is
  \[
    C^1(V_x,V_x) = \Hom_\kk(V_x,V_x),
    \qquad
    C^2(V_x,V_x) = \Hom_\kk(V_x^{\otimes 2},V_x),
  \]
  with differential $d^1\colon C^1\to C^2$ given by
  \[
    (d^1 f)(a,b) = a f(b) - f(a b) + f(a) b.
  \]
  It is standard that $\ker d^1$ is the space of derivations $\Der_\kk(V_x)$ and that
  $B^2(V_x, V_x) = \mathrm{im}(d^1)$.  Since $HH^2(V_x,V_x) = 0$ and $HH^2 \cong Z^2/B^2$, one has
  $Z^2(V_x, V_x) = B^2(V_x, V_x)$ and therefore
  \[
    \dim_\kk Z^2(V_x, V_x)
    = \dim_\kk B^2(V_x, V_x)
    = \dim_\kk C^1(V_x, V_x) - \dim_\kk\ker d^1
    = n^2 - \dim_\kk\Der_\kk(V_x).
  \]
  
  For a separable algebra $V_x$ one also has $HH^1(V_x,V_x)=0$, so every derivation is inner.  The map
  \[
    V_x\longrightarrow \Der_\kk(V_x),\quad
    a\longmapsto [a,-],
  \]
  has kernel equals to the center $Z(V_x)$, and inner derivations are isomorphic as a vector space
  to $V_x/Z(V_x)$.  Thus
  \[
    \dim_\kk\Der_\kk(V_x) = \dim_\kk V_x - \dim_\kk Z(V_x) = n - \dim_\kk Z(V_x),
  \]
  and the claimed formula for $\dim_\kk T_x\Alg(V)$ follows.  Over an algebraically closed field, a
  separable algebra decomposes as $\bigoplus_s M_{r_s}(\kk)$, and the center has dimension $k$.
  Hence $\dim_\kk T_x\Alg(V) = n^2-n+k$ in that case.
\end{proof}

\begin{theorem}\label{thm:rigidity-moduli}
  Assume $\kk$ is algebraically closed.  For $x\in U$, the moduli stack tangent vanishes:
  \[
    T_{[V_x]}[\Alg(V)/\GL(V)] \cong HH^2(V_x, V_x) = 0.
  \]
  Consequently, each isomorphism class $[V_x]$ is an isolated point of the coarse moduli space
  $\Alg(V)\sslash \GL(V)$ in any neighborhood that meets the separable locus.
\end{theorem}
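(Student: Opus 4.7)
The plan is to split the proof into two stages: an algebraic stage identifying the stack tangent with $HH^2(V_x,V_x)$, and a geometric stage transferring its vanishing into isolation in the GIT quotient.

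For the algebraic stage, I would differentiate the $\GL(V)$-action $(g\cdot\mu)(a,b)=g\mu(g^{-1}a,g^{-1}b)$ at $g=\id$ in a direction $f\in\mathfrak{gl}(V)=C^1(V_x,V_x)$. A short expansion gives the tangent vector $(a,b)\mapsto f\mu_x(a,b)-\mu_x(fa,b)-\mu_x(a,fb)$, which is exactly $-d^1 f$, the Hochschild coboundary of $f$. Hence $T_x(\GL(V)\cdot x)=B^2(V_x,V_x)$, and combined with Proposition~\ref{prop:fiber-Z2} the standard short exact sequence of deformation theory yields
\[
T_{[V_x]}[\Alg(V)/\GL(V)]
= T_x\Alg(V)/T_x(\GL(V)\cdot x)
= Z^2(V_x,V_x)/B^2(V_x,V_x)
= HH^2(V_x,V_x).
\]
The vanishing of $HH^2(V_x,V_x)$ for $x\in U$ is already recorded in the proof of Theorem~\ref{thm:tangent-generic}: since $V_x$ is separable, it is projective over its enveloping algebra $V_x^e$, and so $\Ext^m_{V_x^e}(V_x,V_x)=0$ for all $m>0$.

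For the geometric stage, I would first show the orbit is open in $\Alg(V)$ at $x$. Theorem~\ref{thm:tangent-generic} gives $\dim T_x\Alg(V)=n^2-\dim\Der_\kk(V_x)$, while the Lie algebra of $\Aut(V_x)$ is $\Der_\kk(V_x)$, so $\dim\GL(V)\cdot x=n^2-\dim\Der_\kk(V_x)$ as well. The chain $\dim\GL(V)\cdot x\leq\dim_x\Alg(V)\leq\dim T_x\Alg(V)$ therefore collapses to equalities; $x$ is a smooth point of $\Alg(V)$, and the smooth locally closed orbit is open in its component. To upgrade openness to isolation in the GIT quotient, I would invoke Luna's étale slice theorem. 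Since $V_x$ is a finite product of matrix algebras over $\kk$, its automorphism group $\Aut(V_x)$ is reductive in characteristic $0$, so there exists an $\Aut(V_x)$-stable affine slice $S$ through $x$ with $T_xS=HH^2(V_x,V_x)=0$. Then $S$ is a reduced point, an étale $\GL(V)$-neighborhood of the orbit is $\GL(V)\times^{\Aut(V_x)}\{x\}=\GL(V)/\Aut(V_x)$ (a single orbit), and its GIT quotient is $S\sslash\Aut(V_x)=\{\mathrm{pt}\}$. This exhibits $[V_x]$ as an isolated point of $\Alg(V)\sslash\GL(V)$ in any neighborhood meeting $U$.

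The main obstacle, as I see it, is verifying that Luna's slice theorem applies rather than merely giving a formal slice. This depends on the reductivity of $\Aut(V_x)$, which here follows from the Wedderburn decomposition of $V_x$ together with the characteristic-zero and algebraically-closed hypotheses; all remaining bookkeeping is routine once the stack tangent is identified with $HH^2$.
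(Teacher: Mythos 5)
Your proposal follows the paper's proof in both stages: it identifies the stack tangent with $HH^2(V_x,V_x)$ (which the paper attributes to Gerstenhaber's deformation theory and whose vanishing it likewise deduces from separability via projectivity over $V_x^e$) and then invokes Luna's \'etale slice theorem to convert the vanishing of the slice's tangent space into isolation in the coarse quotient. The only difference is that you supply more supporting detail than the paper does --- the explicit computation showing $T_x(\GL(V)\cdot x)=B^2(V_x,V_x)$, the dimension count for openness of the orbit, and the reductivity of $\Aut(V_x)$ needed for Luna's theorem --- none of which changes the route.
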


\begin{proof}
  Gerstenhaber showed that infinitesimal deformations of an associative algebra modulo isomorphism
  are governed by the Hochschild cohomology group $HH^2(V_x, V_x)$ \cite{Gerstenhaber1964}.  For a
  separable algebra this group vanishes.  Thus the tangent space to the deformation functor, and
  hence to the moduli stack $[\Alg(V)/\GL(V)]$ at the point $[V_x]$, is zero.

  Luna’s étale slice theorem \cite[§III.1]{Luna1973} describes a neighborhood of a closed
  $\GL(V)$-orbit in $\Alg(V)$ as an étale quotient of a normal slice by the stabilizer.  When the
  stack tangent vanishes, the slice has trivial tangent space at the base point, so the quotient
  has no nontrivial tangent directions.  Therefore $[V_x]$ is an isolated point of the coarse
  moduli space in any neighborhood that meets the separable locus.
\end{proof}

\begin{theorem}\label{thm:diagonal-stratification}
  The loci where $\rk d_{\mu_x}$ is constant yield a finite determinantal stratification of
  $\Alg(V)$ into locally closed $\GL(V)$-stable subvarieties.  The orbit stratification by
  isomorphism class refines this further.  Over an algebraically closed field, the separable locus
  stratifies by block-size profiles
  \[
    V_x \cong \bigoplus_s M_{r_s}(\kk)
    \qquad\text{with}\qquad
    \sum_s r_s^2=n,
  \]
  while over a general field of characteristic $0$ the stratification is by Wedderburn types
  $V_x\cong \bigoplus_i M_{r_i}(D_i)$ with division algebras $D_i$.  The open top stratum consists
  of separable algebras characterized by $\Delta(x)\neq 0$.
\end{theorem}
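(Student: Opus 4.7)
The plan is to combine three ingredients: the polynomial dependence of the Hochschild differential $d_{\mu_x}$ on $x$, its $\GL(V)$-equivariance, and the Wedderburn--Artin classification on the separable locus. First I would observe that $d_{\mu_x}\colon C^1(V_x,V_x)\to C^2(V_x,V_x)$ depends linearly on the structure constants: the formula $(d^1 f)(a,b)=\mu_x(a,f(b))-f(\mu_x(a,b))+\mu_x(f(a),b)$ makes each matrix entry of $d_{\mu_x}$, in fixed bases of $C^1\cong\End(V)$ and $C^2\cong V^{2,1}$, a linear polynomial in the coordinates $x^\ell_{i,j}$. Hence for each $r$ the locus
\[
  \Alg(V)_{\le r}:=\{x\in\Alg(V) : \rk d_{\mu_x}\le r\}
\]
is closed in $\Alg(V)$, cut out by the vanishing of all $(r+1)\times(r+1)$ minors of this matrix of linear forms. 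The successive differences $\Alg(V)_{\le r}\setminus\Alg(V)_{\le r-1}$ are locally closed, and since $r\in\{0,\dots,n^2\}$ this yields a finite determinantal stratification. Equivariance is automatic: for $g\in\GL(V)$ the natural action on the Hochschild cochains conjugates $d_{\mu_x}$ to $d_{\mu_{g\cdot x}}$, preserving rank, so each stratum is $\GL(V)$-stable.

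Next I would show that the orbit stratification refines this. If $V_x\cong V_{x'}$ as algebras, then any such isomorphism induces an isomorphism of Hochschild complexes intertwining $d_{\mu_x}$ and $d_{\mu_{x'}}$, so the two differentials have equal rank and $x,x'$ lie in the same rank stratum. To identify the orbits concretely on the open separable locus $U=\{x:\Delta(x)\neq 0\}$, I would invoke Wedderburn--Artin. Over an algebraically closed field of characteristic $0$, every separable $V_x$ decomposes as $\bigoplus_s M_{r_s}(\kk)$ with $\sum_s r_s^2=n$, and the isomorphism class is determined by the unordered tuple $(r_s)$. Since there are only finitely many partitions of $n$ into sums of squares, the separable locus decomposes into finitely many $\GL(V)$-orbits indexed by these block-size profiles. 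Over a general field of characteristic $0$, Wedderburn--Artin instead gives $V_x\cong\bigoplus_i M_{r_i}(D_i)$ with $D_i$ a finite-dimensional division algebra over $\kk$, and the orbit stratification on $U$ is indexed by such Wedderburn types.

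Finally, the identification of the open top stratum with $\{x:\Delta(x)\neq 0\}$ follows directly from Theorem~\ref{thm:Aguiar}: nondegeneracy of the principal trace form characterizes separability in characteristic $0$, and non-vanishing of the discriminant is a Zariski open condition; openness in $\Alg(V)$ of the top rank-stratum follows from lower semicontinuity of the rank of the minors of $d_{\mu_x}$. The main obstacle I anticipate is the second step: establishing that on the separable locus the isomorphism-class decomposition is a genuine stratification by locally closed $\GL(V)$-stable subvarieties rather than merely a set-theoretic partition. This is handled by noting that on $U$ the algebras are semisimple and hence have reductive stabilizers, so Luna's slice theorem implies each $\GL(V)$-orbit is locally closed in $U$, giving the desired locally closed decomposition indexed by Wedderburn types and refining the rank stratification.
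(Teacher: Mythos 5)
Your proof is correct and follows essentially the same route as the paper: determinantal rank loci cut out by minors of the polynomially-varying Hochschild differential, $\GL(V)$-equivariance of those loci, refinement by the orbit stratification via isomorphism of Hochschild complexes, Wedderburn--Artin on the separable locus, and Aguiar's theorem identifying the open top stratum with $\Delta(x)\neq 0$. The one discrepancy worth fixing is that you stratify by the degree-one differential $d^1\colon C^1\to C^2$, whereas the map $d_{\mu_x}$ in the statement is (per Proposition~\ref{prop:fiber-Z2}) the degree-two differential $C^2\to C^3$; the argument is word-for-word the same for either map, but the resulting rank strata are a priori different, so you should align with that convention (your additional observation that orbits are locally closed is a welcome point the paper leaves implicit, though the general fact that orbits of algebraic group actions are open in their closures suffices without invoking Luna).
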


\begin{proof}
  For each $x\in \Alg(V)$, the Hochschild differential
  $d_{\mu_x}\colon C^2(V_x,V_x)\to C^3(V_x,V_x)$ depends polynomially on the structure constants of
  $\mu_x$.  Choosing bases for $C^2$ and $C^3$, the matrix entries of $d_{\mu_x}$ become polynomial
  functions of $x$, and the rank function $x\mapsto \rk d_{\mu_x}$ is upper semicontinuous.  The
  loci where this rank is bounded above by a fixed integer are closed determinantal conditions, and
  the differences of consecutive such loci produce a finite stratification by locally closed sets
  on which the rank is constant.  The $\GL(V)$-equivariance of $d_{\mu_x}$ implies that these
  strata are $\GL(V)$-stable.

  Two points $x,x'\in \Alg(V)$ in the same $\GL(V)$-orbit define isomorphic algebras $V_x$ and
  $V_{x'}$ and therefore have isomorphic Hochschild complexes with differentials of the same rank.
  Thus each isomorphism class lies inside a single rank stratum, and the orbit stratification
  refines the determinantal stratification.  The description of strata on the separable locus
  follows from the Wedderburn structure theorem: over an algebraically closed field, $V_x$
  decomposes as $\bigoplus_s M_{r_s}(\kk)$ and the tuple $(r_s)$ determines the algebra up to
  isomorphism and hence the orbit type, while over a general field of characteristic $0$ one
  obtains Wedderburn components $M_{r_i}(D_i)$.  Theorem~\ref{thm:Aguiar} identifies the subset
  where $\Delta(x)\neq 0$ with the semisimple, hence separable, algebras, and this locus forms the
  unique open top stratum on each irreducible component that it meets.
\end{proof}

By Theorem~\ref{thm:tangent-generic}, tangent spaces along the separable stratum have dimension
$n^2 - n + \dim Z(V_x)$.  Theorem~\ref{thm:rigidity-moduli} shows that these directions are
precisely $\GL(V)$-orbit directions and vanish upon passage to isomorphism classes.

\begin{corollary}\label{cor:count-assoc-orbits}
  Assume $\kk$ is algebraically closed of characteristic $0$.  The number of open $\GL(V)$-orbits
  in the separable locus $U\subset \Alg(V)$ is equal to the number of partitions of $n$ as an
  ordered sum of squares:
  \[
    N_{\mathrm{assoc}}(n) := \#\left\{(r_1,\ldots,r_k) : r_1 \ge \cdots \ge r_k \ge 1,\ \sum_{s=1}^k r_s^2 = n\right\}.
  \]
\end{corollary}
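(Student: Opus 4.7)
The plan is to combine the rigidity statement of Theorem~\ref{thm:rigidity-moduli} with the Wedderburn--Artin classification of finite-dimensional semisimple algebras over an algebraically closed field, reducing the count to a combinatorial one.

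First I would establish a bijection between open $\GL(V)$-orbits in $U$ and isomorphism classes of $n$-dimensional separable $\kk$-algebras. Two points $x,x'\in U$ lie in the same $\GL(V)$-orbit if and only if the algebras $V_x,V_{x'}$ are isomorphic, by definition of the transport-of-structure action used in Proposition~\ref{prop:Q-rep}. So what has to be checked is that every $x\in U$ actually sits on an open orbit of $\Alg(V)$. Two routes are available: either quote the stack tangent vanishing $HH^2(V_x,V_x)=0$ from Theorem~\ref{thm:rigidity-moduli}, which by Luna's slice theorem forces the orbit to be open; or carry out a direct dimension count, which I prefer because it keeps everything inside $\GL(V)$-geometry.

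For the dimension count, I would observe that the stabilizer of $x$ under the action
\[
  (g\cdot x)(a,b) = g\,\mu_x(g^{-1}a,g^{-1}b)
\]
is exactly the algebra automorphism group $\Aut(V_x)$. For $V_x \cong \bigoplus_{s=1}^k M_{r_s}(\kk)$, the Skolem--Noether theorem gives $\Aut(V_x)^{\circ}\cong\prod_s \mathrm{PGL}_{r_s}(\kk)$ (blocks of equal size contribute only a finite symmetric group to the component group), so
\[
  \dim_\kk \Aut(V_x)
  = \sum_{s=1}^k (r_s^2 - 1)
  = n - k.
\]
Consequently $\dim_\kk \GL(V)\cdot x = n^2 - (n-k) = n^2 - n + k$, which agrees with the generic tangent dimension from Theorem~\ref{thm:tangent-generic}. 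Since $T_x(\GL(V)\cdot x)\subseteq T_x\Alg(V)$ and the two spaces have the same dimension, the orbit is open at $x$.

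Finally I would invoke the Wedderburn--Artin theorem: over an algebraically closed field of characteristic $0$, every separable $n$-dimensional algebra decomposes uniquely (up to reordering) as $\bigoplus_s M_{r_s}(\kk)$ with $\sum_s r_s^2 = n$, and the isomorphism class is determined by the unordered multiset $\{r_1,\ldots,r_k\}$. Choosing the canonical decreasing representative $r_1\ge\cdots\ge r_k\ge 1$ yields the stated bijection with partitions of $n$ into squares, and the formula for $N_{\mathrm{assoc}}(n)$ follows. The only delicate point is the stabilizer dimension in the presence of repeated blocks $r_s = r_{s'}$, but the extra symmetry is discrete (a symmetric group permuting isomorphic factors) and does not alter the continuous dimension $n-k$, so the open-orbit argument goes through uniformly.
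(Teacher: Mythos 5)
Your proof is correct and follows the same overall route the paper intends (the corollary is stated without a separate proof, resting on Theorems~\ref{thm:tangent-generic} and~\ref{thm:rigidity-moduli} together with Wedderburn--Artin): orbits in $U$ biject with isomorphism classes of $n$-dimensional separable algebras, each such orbit is open, and the classes are counted by multisets of squares summing to $n$. Your only deviation is establishing openness via the Skolem--Noether stabilizer computation $\dim\Aut(V_x)=n-k$ matched against the tangent dimension $n^2-n+k$ from Theorem~\ref{thm:tangent-generic}, rather than by quoting $HH^2(V_x,V_x)=0$; both are valid, and you handle the repeated-block subtlety correctly since the extra symmetry is finite and does not affect the dimension.
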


\begin{remark}
  The numbers $N_{\mathrm{assoc}}(n)$ for small $n$ form the sequence \cite[A001156]{OEIS}
  \[
    N_{\mathrm{assoc}}(1),\ldots,N_{\mathrm{assoc}}(20): 1, 1, 1, 1, 2, 2, 2, 2, 3, 4, 4, 4, 5, 6, 6, 6, 8, 9, 10, 10.
  \]
\end{remark}

\subsection{The commutative case, Harrison cohomology, and rigidity}\label{sec:Comm-quotient-rigidity}

Impose commutativity via the linear ideal
\[
  I_{\mathrm{comm}} := \langle x^\ell_{i,j}-x^\ell_{j,i}\rangle
  \subset \kk[x_{i,j}^\ell]
\]
and define the commutative associative locus
\[
  \Comm(V) := \mathrm{V}(I_{\mathrm{comm}}) \cap \Alg(V).
\]
All constructions from \S\ref{sec:bilinearization} restrict naturally to this linear subspace of
$V^{2,1}$.  For $x\in \Comm(V)$, the Hochschild description of the tangent space becomes
\[
  T_x\Comm(V)
  = \{ \varphi\in Z^2(V_x, V_x): \varphi(a,b) = \varphi(b,a) \}.
\]
In other words, $T_x\Comm(V)$ consists of those Hochschild $2$-cocycles that are symmetric in the
two arguments.

For a commutative algebra $A$, Harrison introduced a refinement of Hochschild cohomology whose
$2$-cocycles classified infinitesimal commutative deformations~\cite{Harrison1962}.  The Harrison
complex can be described as a quotient of the Hochschild complex by shuffle relations, and in
degree $2$ the natural map from symmetric Hochschild $2$-cochains to the Harrison complex is an
isomorphism~\cite{Barr1968,Barr1969,GerstenhaberSchack1988}.  Thus, for $x\in \Comm(V)$, the
symmetric $2$-cocycles above identify canonically with Harrison $2$-cocycles:
\[
  T_x\Comm(V) \cong  \Harr^2(V_x,V_x),
\]
in agreement with the general deformation theory of commutative algebras and its André–Quillen
reinterpretation~\cite{Andre1967,Quillen1970}.

The principal trace form from the previous subsection restricts to $\Comm(V)$ with the same
discriminant $\Delta(x) = \det D_x$.  By Aguiar's theorem, the open subset
\[
  U_{\Comm} := \{x\in \Comm(V): \Delta(x)\neq 0\}
\]
consists precisely of separable commutative algebras.  Over a field of characteristic $0$ these are
exactly the finite étale $\kk$-algebras of dimension $n$, that is, finite products of finite
separable field extensions.

\begin{theorem}\label{thm:rigid-UComm}
  For $x\in U_{\Comm}$, the algebra $V_x$ is finite étale and satisfies $HH^m(V_x,V_x)=0$ and
  $\Harr^m(V_x,V_x) = 0$ for $m>0$.  Consequently,
  \[
    T_x\Comm(V) = T_x(\GL(V) \cdot x)
    \quad\text{and}\quad
    T_{[x]}[\Comm(V)/\GL(V)] = 0.
  \]
  Thus each $\GL(V)$-orbit in $U_{\Comm}$ is Zariski open, and $[x]$ is isolated in the coarse
  moduli space $\Comm(V)\sslash \GL(V)$.
\end{theorem}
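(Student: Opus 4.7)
The plan is to first establish the vanishing of both Hochschild and Harrison cohomology at a point $x\in U_{\Comm}$, then translate these into the tangent-space assertions, and finally invoke Luna's slice theorem to deduce the conclusions about orbits and the coarse moduli. The first step is almost immediate from what is already in place: by Theorem~\ref{thm:Aguiar}, nonvanishing of $\Delta(x)$ is equivalent to semisimplicity of $V_x$, which in characteristic $0$ coincides with separability; a commutative separable $\kk$-algebra is finite étale, so $V_x\cong \prod_i L_i$ for finite separable field extensions $L_i/\kk$. The vanishing $HH^m(V_x,V_x)=0$ for $m>0$ then follows exactly as in the proof of Theorem~\ref{thm:tangent-generic}: separability makes $V_x$ projective over its enveloping algebra $V_x^e$, and \cite[Prop.~9.1.10]{Weibel1994} gives $\Ext^m_{V_x^e}(V_x,V_x)=0$ for all $m>0$.

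Next, I would transfer this vanishing to Harrison cohomology. In characteristic $0$ the Gerstenhaber–Schack Hodge-type decomposition~\cite{GerstenhaberSchack1988}, realized concretely also by Barr~\cite{Barr1968,Barr1969}, exhibits $\Harr^m$ as a natural direct summand of $HH^m$; hence $HH^m(V_x,V_x)=0$ forces $\Harr^m(V_x,V_x)=0$ for every $m>0$. Combined with the Barr–Gerstenhaber–Schack identification of symmetric Hochschild $2$-cocycles with $Z^2_{\Harr}$ already invoked in \S\ref{sec:Comm-quotient-rigidity}, this yields $T_x\Comm(V)\cong Z^2_{\Harr}(V_x,V_x)=B^2_{\Harr}(V_x,V_x)$. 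The orbit tangent $T_x(\GL(V)\cdot x)$ is the image of the map $\mathfrak{gl}(V)\to V^{2,1}$, $\xi\mapsto d_{\mu_x}(\xi)$; at a commutative point its image consists precisely of symmetric Hochschild coboundaries, which under the Barr–Gerstenhaber–Schack isomorphism become $B^2_{\Harr}(V_x,V_x)$. Thus $T_x\Comm(V)=T_x(\GL(V)\cdot x)$, and the stack tangent is canonically $T_{[x]}[\Comm(V)/\GL(V)]\cong \Harr^2(V_x,V_x)=0$.

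For the final geometric conclusions, the equality of tangent spaces means $\dim_x(\GL(V)\cdot x)=\dim_x \Comm(V)$; since every $\GL(V)$-orbit is a locally closed algebraic subvariety, this dimensional equality forces the orbit through $x$ to be Zariski open in $\Comm(V)$. The isolation of $[x]$ in the coarse quotient $\Comm(V)\sslash \GL(V)$ then follows verbatim as in the proof of Theorem~\ref{thm:rigidity-moduli}: the vanishing of the stack tangent, together with Luna's étale slice theorem~\cite{Luna1973} applied to the reductive stabilizer of an étale algebra, shows that the normal slice is a point, so the image of the orbit is isolated in the GIT quotient. The main technical obstacle I anticipate is the bookkeeping in the middle step — one must verify that the infinitesimal $\GL(V)$-action at a commutative $x$ really produces precisely the symmetric Hochschild coboundaries, and that the Barr–Gerstenhaber–Schack map carries these to $B^2_{\Harr}$ compatibly with the Hodge projection. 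Once this identification is pinned down, the remainder of the argument is routine homological algebra together with the slice theorem.
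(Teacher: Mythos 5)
Your argument is correct and follows the paper's proof almost step for step: finite \'etaleness of $V_x$ from Aguiar's theorem, $HH^m(V_x,V_x)=0$ from projectivity over the enveloping algebra as in Theorem~\ref{thm:tangent-generic}, identification of $T_x\Comm(V)$ with symmetric cocycles $=$ coboundaries $=$ orbit directions, and openness plus isolation via the slice argument of Theorem~\ref{thm:rigidity-moduli}. The one place you genuinely diverge is the vanishing of Harrison cohomology: the paper deduces $\Harr^m(V_x,V_x)=0$ from the Andr\'e--Quillen side (the cotangent complex of a finite \'etale algebra is concentrated in degree $0$), whereas you extract it from the Barr/Gerstenhaber--Schack Hodge decomposition, which in characteristic $0$ realizes $\Harr^m$ as a direct summand of $HH^m$, so the already-established Hochschild vanishing does all the work. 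Your route is arguably more self-contained, since it reuses the separability input directly instead of importing the cotangent-complex formalism; the paper's route has the advantage of killing all higher Andr\'e--Quillen groups at once and tying the statement to the \'etale picture. The ``bookkeeping'' you flag --- that the infinitesimal $\GL(V)$-action at a commutative point lands exactly in the symmetric Hochschild coboundaries --- is indeed the crux, but it is immediate: for commutative $\mu_x$ every coboundary $(d^1f)(a,b)=af(b)-f(ab)+f(a)b$ is automatically symmetric in $a$ and $b$, so the orbit tangent is all of $B^2(V_x,V_x)$, and $HH^2=0$ then gives $T_x\Comm(V)=Z^2\cap(\text{symmetric cochains})=B^2=T_x(\GL(V)\cdot x)$, exactly as in the paper.
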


\begin{proof}
  If $x\in U_{\Comm}$ then $V_x$ is separable and commutative, hence finite étale of dimension
  $n$ over $\kk$.  As in the proof of Theorem~\ref{thm:tangent-generic}, separability implies $V_x$
  is projective over $V_x^e$, and therefore $HH^m(V_x,V_x) = 0$ for $m>0$.  The André–Quillen
  interpretation of commutative deformation theory \cite{Andre1967,Quillen1970} shows that the
  cotangent complex of a finite étale algebra is concentrated in degree $0$, so all higher
  André–Quillen and Harrison cohomology groups vanish.  In particular, $\Harr^2(V_x,V_x) = 0$.

  The identification $T_x\Comm(V)\cong \Harr^2(V_x, V_x)$ then shows $T_x\Comm(V)=0$ modulo the
  tangent directions coming from the $\GL(V)$-orbit.  More concretely, since $HH^2(V_x, V_x)=0$,
  every Hochschild $2$-cocycle is a coboundary.  Intersecting with the symmetry condition shows
  that every element of $T_x\Comm(V)$ arises from the derivative of an inner automorphism, so
  \[
    T_x\Comm(V) = T_x(\GL(V) \cdot x).
  \]
  Hence the orbit accounts for all tangent directions in $\Comm(V)$ at $x$, which means that the
  orbit is open in $\Comm(V)$.  Passing to the quotient stack and then to the coarse moduli space
  removes these orbit directions, so the tangent space at $[x]$ vanishes and $[x]$ is isolated.
\end{proof}

\begin{corollary}\label{cor:UComm-orbits}
  The locus $U_{\Comm}$ is a finite union of open $\GL(V)$-orbits indexed by isomorphism classes of
  finite étale $\kk$-algebras of dimension $n$.  Over an algebraically closed field, $U_{\Comm}$
  is a single open orbit corresponding to $\kk^n$ with coordinate-wise product.
\end{corollary}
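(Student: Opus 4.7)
My plan is to combine the openness of each $\GL(V)$-orbit, which is already packaged into Theorem~\ref{thm:rigid-UComm}, with a quasi-compactness argument to force finiteness, and then identify the resulting index set via Wedderburn-type structure theory.

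First I would invoke Theorem~\ref{thm:rigid-UComm} to conclude that for every $x\in U_{\Comm}$ the orbit $\GL(V)\cdot x$ is Zariski-open in $\Comm(V)$, hence open in the subspace $U_{\Comm}$. Since orbits of a group action are pairwise disjoint, the collection $\{\GL(V)\cdot x\}_{x}$ is a disjoint open partition of $U_{\Comm}$. The space $U_{\Comm}$ is locally closed in the affine space $V^{2,1}\cong \kk^{n^3}$, and so is Noetherian and quasi-compact in the Zariski topology. Any disjoint open cover of a quasi-compact space is forced to be finite (the cover itself admits no proper finite subcover), which yields finitely many orbits.

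Next I would identify the indexing set. Two structure constants $x,x'\in \Comm(V)$ lie in a common $\GL(V)$-orbit if and only if the change of basis expressed by $g\in \GL(V)$ carries $\mu_x$ to $\mu_{x'}$, i.e.\ if and only if $V_x\cong V_{x'}$ as $\kk$-algebras. By Aguiar's theorem (Theorem~\ref{thm:Aguiar}) combined with $\Delta(x)\ne 0$, the algebras appearing in $U_{\Comm}$ are exactly the commutative separable $\kk$-algebras of dimension $n$, which in characteristic $0$ coincide with the finite étale $\kk$-algebras of dimension $n$. This gives the bijection between orbits and isomorphism classes asserted in the first sentence.

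For the second assertion I would appeal to the structure theorem for commutative separable algebras: any such algebra decomposes as a finite product of finite separable field extensions of $\kk$. Over an algebraically closed $\kk$ every such extension is $\kk$ itself, and the dimension constraint $n$ forces $V_x\cong \kk^n$ with the coordinate-wise product. There is therefore a unique isomorphism class, and by the first part the corresponding orbit is open and fills $U_{\Comm}$. The only genuinely nontrivial step is the quasi-compactness trick used to upgrade pointwise openness to global finiteness; everything else is bookkeeping translating Theorem~\ref{thm:rigid-UComm} and classical Wedderburn theory into the stated orbit decomposition.
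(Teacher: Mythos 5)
Your proposal is correct and, for the identification of orbits with isomorphism classes and for the algebraically closed case, follows essentially the same route as the paper: transport of structure gives the bijection between $\GL(V)$-orbits in $U_{\Comm}$ and isomorphism classes of the algebras $V_x$, Aguiar's theorem identifies these as the separable commutative (hence finite \'etale) algebras of dimension $n$, and over $\bar\kk=\kk$ the structure theory collapses everything to $\kk^n$. The genuine difference is your treatment of finiteness. The paper's proof simply asserts the bijection with isomorphism classes and tacitly treats that index set as finite; this is automatic over an algebraically closed field but fails for a general field of characteristic $0$ (over $\mathbb{Q}$ there are already infinitely many quadratic \'etale algebras $\mathbb{Q}(\sqrt d\,)$). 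Your argument instead derives finiteness topologically: each orbit is open by Theorem~\ref{thm:rigid-UComm}, the orbits partition $U_{\Comm}$, and a pairwise disjoint open cover of a quasi-compact (Noetherian) space admits no proper subcover and hence must itself be finite. That step is valid and is exactly what is needed to justify the word ``finite'' in the statement; it is a real improvement on the paper's proof. (It also quietly exposes a tension over non-closed fields: since the $\kk$-points of $U_{\Comm}$ can fall into infinitely many $\GL_n(\kk)$-orbits, the openness of every such orbit in the na\"{\i}ve rational-point topology and the finiteness of the orbit set cannot both hold; the resolution is that the scheme-theoretic or geometric orbits are what are open and finite in number, while the $\kk$-rational orbits within a geometric orbit are governed by Galois cohomology. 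Neither your proof nor the paper's addresses this, but yours at least makes the dependence on openness explicit.)
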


\begin{proof}
  Finite étale $\kk$-algebras of dimension $n$ are classified up to isomorphism, and two points
  $x,x'\in U_{\Comm}$ lie in the same $\GL(V)$-orbit if and only if the corresponding algebras
  $V_x$ and $V_{x'}$ are isomorphic as $\kk$-algebras.  Thus the $\GL(V)$-orbits in $U_{\Comm}$ are
  in bijection with isomorphism classes of finite étale $\kk$-algebras of dimension $n$.  When
  $\kk$ is algebraically closed, every finite étale algebra of dimension $n$ is isomorphic to
  $\kk^n$, so there is a single such orbit.
\end{proof}

Since finite étale algebras have no nonzero derivations, the formula of
Theorem~\ref{thm:tangent-generic} shows that $\dim T_x\Alg(V) = n^2$ at points of $U_{\Comm}$.  The
same calculation and the symmetry constraint give $\dim T_x\Comm(V) = n^2$ there.  Thus $\Comm(V)$
is smooth of dimension $n^2$ along $U_{\Comm}$ and each $\GL(V)$-orbit in $U_{\Comm}$ is open.

\section{The Leibniz Case}\label{sec:Leibniz}

\subsection{The quadratic Leibniz locus}\label{sec:Leibniz-locus}

We work with the right Leibniz identity
\[
  \mu(\mu(a,b),c) = \mu(\mu(a,c),b) + \mu(a,\mu(b,c)).
\]
As in the associative case, we view $V^{2,1} := \Hom(V\otimes V,V)$ and
$V^{3,1} := \Hom(V^{\otimes 3},V)$ as affine spaces of structure constants.  For $x\in V^{2,1}$ let
$\mu_x$ be the corresponding bilinear law and define
\[
  F\colon V^{2,1}\longrightarrow V^{3,1},\qquad
  F(x)(a,b,c) := \mu_x(\mu_x(a,b),c) - \mu_x(\mu_x(a,c),b) - \mu_x(a,\mu_x(b,c)).
\]

\begin{proposition}\label{prop:Leibniz-quadrics}
  The Leibniz locus
  \[
    \Leib(V) : =\{ x\in V^{2,1} : F(x) = 0 \}
  \]
  is a closed subscheme cut out by $n^4$ quadratic polynomials
  \[
    G^m_{i,j,k}(x)
    = \sum_{\ell=1}^n  x^\ell_{i,j}  x^m_{\ell,k}
    - \sum_{\ell=1}^n  x ^\ell_{i,k} x^m_{\ell,j}
    - \sum_{\ell=1}^n  x^m_{i,\ell} x^\ell_{j,k}.
  \]
\end{proposition}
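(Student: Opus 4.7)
The plan is to mirror the proof of Proposition~\ref{prop:assoc-quadrics} essentially verbatim: the right Leibniz identity is, like associativity, a ternary expression of degree $2$ in $\mu$, differing from the associator only by a swap of two arguments and an extra term. First I would note that the map $F\colon V^{2,1}\to V^{3,1}$ is visibly polynomial, and in fact homogeneous quadratic, in the structure constants of $x$; therefore $\Leib(V) = F^{-1}(0)$ is a closed subscheme of $V^{2,1}$, and scheme-theoretically its defining ideal is generated by the components of $F$ with respect to any chosen basis of $V^{3,1}$.

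Next I would take the obvious tensor basis on $V^{3,1}$, dual to the triples $(e_i,e_j,e_k)$ with target $e_m$, and expand $F(x)(e_i,e_j,e_k)$ one term at a time using $\mu_x(e_a,e_b) = \sum_\ell x^\ell_{a,b}\, e_\ell$. Each of the three nested products then becomes a double sum:
\[
\mu_x(\mu_x(e_i,e_j),e_k) = \sum_{\ell,m} x^\ell_{i,j}\, x^m_{\ell,k}\, e_m,
\qquad
\mu_x(\mu_x(e_i,e_k),e_j) = \sum_{\ell,m} x^\ell_{i,k}\, x^m_{\ell,j}\, e_m,
\]
\[
\mu_x(e_i,\mu_x(e_j,e_k)) = \sum_{\ell,m} x^m_{i,\ell}\, x^\ell_{j,k}\, e_m.
\]
Collecting the coefficient of $e_m$ in $F(x)(e_i,e_j,e_k)$ produces precisely the polynomial $G^m_{i,j,k}(x)$ of the statement. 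As the indices $(i,j,k,m)$ range independently over $\{1,\dots,n\}$, one obtains exactly $n^4$ such polynomials, each a sum of three products of two structure constants, hence homogeneous of degree $2$.

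There is essentially no hard step in this argument; the only point worth highlighting is the distinction between set-theoretic and scheme-theoretic vanishing. The $n^4$ polynomials $G^m_{i,j,k}$ cut out $\Leib(V)$ as a scheme because the functionals $e_i\otimes e_j\otimes e_k \mapsto e_m$ form a basis of $(V^{3,1})^\vee$, so the vanishing of the $V^{3,1}$-valued map $F(x)$ is equivalent to the simultaneous vanishing of all its coordinate polynomials, and these polynomials generate the corresponding ideal in $\kk[x^\ell_{i,j}]$. This yields the claimed quadratic presentation, directly parallel to Proposition~\ref{prop:assoc-quadrics}.
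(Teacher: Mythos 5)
Your proof is correct: the coordinate expansion of each of the three nested products is accurate, the coefficient of $e_m$ in $F(x)(e_i,e_j,e_k)$ is exactly $G^m_{i,j,k}(x)$, and the remark that vanishing of the $V^{3,1}$-valued map $F$ is equivalent (scheme-theoretically) to the vanishing of its $n^4$ coordinate functions is the right justification. The paper omits the proof entirely as a routine verification, and your argument is precisely the direct computation it leaves implicit, parallel to Proposition~\ref{prop:assoc-quadrics}.
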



\subsection{Bilinearization and cohomological interpretation}\label{sec:Leib-bilinearization}

Define a bilinear map $B \colon V^{2,1}\times V^{2,1}\to V^{3,1}$ by
\[
  B(x,y)(a,b,c) := \mu_x(\mu_y(a,b),c) - \mu_x(\mu_y(a,c),b) - \mu_y(a,\mu_x(b,c)),
\]
where for a $2$-cochain $y$ we write $\mu_y(a,b):=y(a,b)$.  In structure constants this is
\[
  B^m_{i,j,k}(x,y)
  = \sum_{\ell} \left(x^\ell_{i,j} y^m_{\ell,k} - x^\ell_{i,k} y^m_{\ell,j} - x^m_{i,\ell} y^\ell_{j,k}\right).
\]
By definition $F(x)=B(x,x)$.

\begin{proposition}\label{prop:Leib-diagonal}
  For $x\in \Leib(V)$, the Zariski tangent space is
  \[
    T_x\Leib(V) = \{ y\in V^{2,1}: B(x,y) + B(y,x) = 0 \}.
  \]
\end{proposition}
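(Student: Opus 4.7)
The plan is to read this as the scheme-theoretic version of the polarization identity $F(x) = B(x,x)$, exactly parallel to the associative computation in Proposition~\ref{prop:fiber-Z2}. By Proposition~\ref{prop:Leibniz-quadrics}, the Leibniz locus $\Leib(V)$ is the vanishing scheme of the vector-valued quadratic map $F\colon V^{2,1}\to V^{3,1}$, with coordinate equations $G^m_{i,j,k}$. The Zariski tangent space at a point $x\in\Leib(V)$ is therefore the intersection of the kernels of the differentials of these defining equations, equivalently the kernel of the total differential $dF(x)\colon V^{2,1}\to V^{3,1}$.

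First I would compute $dF(x)$ directly by expanding $F(x+ty) = B(x+ty,x+ty)$ as a polynomial in $t$. Using the bilinearity of $B$, this expansion gives
\[
F(x+ty) = B(x,x) + t\bigl(B(x,y)+B(y,x)\bigr) + t^{2} B(y,y),
\]
so the coefficient of $t$ reads $dF(x)[y] = B(x,y)+B(y,x)$. One could alternatively verify this coordinate-by-coordinate by differentiating each quadratic polynomial $G^{m}_{i,j,k}$ with respect to the structure constants $y^{\ell}_{\alpha,\beta}$, but the coordinate-free polarization is cleaner and makes the $\GL(V)$-equivariance manifest.

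Second, I would conclude by combining the scheme-theoretic tangent description with this formula: the Zariski tangent space
\[
T_x\Leib(V) = \ker dF(x) = \{\,y\in V^{2,1} : B(x,y)+B(y,x)=0\,\}
\]
is exactly the symmetrized incidence locus at $x$, as claimed. It is worth noting (perhaps in a sentence) that unlike the associative case, where the symmetry $q_\varphi = q_\varphi^{t}$ allowed us to absorb $B(x,y)+B(y,x)$ into a single bilinear form, here $B$ has no such built-in symmetry, so both terms must be kept.

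There is no serious obstacle: the argument is a direct polarization of a quadratic map, valid in any characteristic other than $2$, and in particular in characteristic $0$. The only point that requires care is conceptual rather than technical, namely that one uses the full scheme-theoretic definition of the tangent space so that the linearization of each $G^{m}_{i,j,k}$ contributes to the cutting equations; this is automatic because the $G^{m}_{i,j,k}$ are by construction generators of the defining ideal of $\Leib(V)$.
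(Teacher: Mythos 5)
Your proposal is correct and follows essentially the same route as the paper: both expand $F(x+ty)=B(x+ty,x+ty)$ by bilinearity, read off the linear term $B(x,y)+B(y,x)$ as the differential, and identify the Zariski tangent space with its kernel. The extra remarks on coordinates and on the lack of symmetry of $B$ are accurate but not needed.
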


\begin{proof}
  For $x,y\in V^{2,1}$ we have
  \[
    F(x+ty) = B(x+ty, x+ty)
    = B(x,x) + t \bigl( B(x,y) + B(y,x) \bigr) + t^2 B(y,y),
  \]
  by bilinearity of $B$.  If $x\in\Leib(V)$ then $B(x,x) = 0$, so
  \[
    F(x+ty) = t \bigl( B(x,y) + B(y,x) \bigr) + O(t^2).
  \]
  Thus the directional derivative of $F$ at $x$ in direction $y$ is
  \[
    \mathrm dF_x(y) = B(x,y) + B(y,x),
  \]
  and $y$ is tangent to $\Leib(V)$ at $x$ if and only if this derivative vanishes.  This yields the
  stated description of $T_x\Leib(V)$.
\end{proof}

For a Leibniz algebra $(V,\mu_x)$, let $C^p(V,V) := \Hom(V^{\otimes p},V)$ be the right Leibniz
cochain space with coefficients in the adjoint bimodule.  The degree-$2$ coboundary
\[
  \delta_x\colon C^2(V,V)\longrightarrow C^3(V,V)
\]
is given by~\cite{Loday1993,LodayPirashvili1993}
\begin{align*}
  (\delta_x\varphi)(a,b,c)
  = & \mu_x(\varphi(a,b),c) - \mu_x(\varphi(a,c),b) - \mu_x(a,\varphi(b,c))\\
    & + \varphi(\mu_x(a,b),c) - \varphi(\mu_x(a,c),b) - \varphi(a,\mu_x(b,c)).
\end{align*}

\begin{lemma}\label{lem:Leib-delta-B}
  For $x\in \Leib(V)$ and $y\in C^2(V,V)$ we have
  \[
    \delta_x y  =  B(x,y)+B(y,x).
  \]
\end{lemma}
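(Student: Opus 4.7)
The identity to prove is a formal bilinear statement, so the plan is simply to expand both sides and match them term by term; the hypothesis $x\in\Leib(V)$ will not actually be used, since the equality $\delta_x y = B(x,y)+B(y,x)$ holds as an identity of trilinear maps on $V$ for \emph{any} $x,y\in V^{2,1}$ (the Leibniz condition on $x$ is only what makes $\delta_x$ a differential with $\delta_x^2=0$, not what makes this single-coboundary formula true).

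First I would write out $B(x,y)(a,b,c)+B(y,x)(a,b,c)$ from the definition of $B$ in \S\ref{sec:Leib-bilinearization}. The first summand contributes the three terms $\mu_x(\mu_y(a,b),c)$, $-\mu_x(\mu_y(a,c),b)$, $-\mu_y(a,\mu_x(b,c))$, and the second summand contributes $\mu_y(\mu_x(a,b),c)$, $-\mu_y(\mu_x(a,c),b)$, $-\mu_x(a,\mu_y(b,c))$, obtained by swapping the roles of $x$ and $y$. Next I would write out $(\delta_x y)(a,b,c)$ using the explicit Loday--Pirashvili formula recalled just above the lemma, with $\varphi=y$ so that $\varphi(a,b)=\mu_y(a,b)$.

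The matching is then a direct comparison of six terms on each side: the three ``$\mu_x$-outside'' terms of $\delta_x y$ (namely $\mu_x(\varphi(a,b),c)$, $-\mu_x(\varphi(a,c),b)$, $-\mu_x(a,\varphi(b,c))$) split between the two summands of $B$ — the first two coming from $B(x,y)$ and the third from $B(y,x)$ — while the three ``$\mu_x$-inside'' terms (namely $\varphi(\mu_x(a,b),c)$, $-\varphi(\mu_x(a,c),b)$, $-\varphi(a,\mu_x(b,c))$) similarly split with the first two coming from $B(y,x)$ and the third from $B(x,y)$. Each pairing is a direct substitution $\varphi\leftrightarrow\mu_y$, with no further cancellation needed.

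The only real obstacle, which is purely bookkeeping, is checking the signs and the placement of $b$ versus $c$ on the third entry: the two non-skew terms in the Leibniz differential involve $(a,c)$ in the second slot, not $(b,c)$, and it is worth double-checking that these are exactly the terms that come from the middle summand of $B$ (the one with the minus sign in front of $\mu_?(\mu_?(a,c),b)$) in both $B(x,y)$ and $B(y,x)$. Once this is verified, the six terms on each side coincide and the identity follows.
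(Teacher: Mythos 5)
Your proof is correct and is essentially the paper's own argument: expand $B(x,y)+B(y,x)$ from the definition and match the six terms against the Loday--Pirashvili formula for $\delta_x y$ by the substitution $\varphi=\mu_y$. Your side remark that the identity holds for arbitrary $x\in V^{2,1}$ (the Leibniz condition being irrelevant to this single formula) is also accurate, and the paper's proof likewise never uses it.
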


\begin{proof}
  Using the notation $\mu_x$ for the Leibniz product and writing $y(a,b)$ for a general
  $2$-cochain, we compute
  \begin{align*}
    B(x,y)(a,b,c)
    & = \mu_x(y(a,b),c) - \mu_x(y(a,c),b) - y(a,\mu_x(b,c)),\\
    B(y,x)(a,b,c)
    & = y(\mu_x(a,b),c) - y(\mu_x(a,c),b) - \mu_x(a,y(b,c)).
  \end{align*}
  Adding these gives
  \begin{align*}
    B(x,y)(a,b,c)+B(y,x)(a,b,c)
    = & \mu_x(y(a,b),c) - \mu_x(y(a,c),b) - \mu_x(a,y(b,c))\\
      & + y(\mu_x(a,b),c) - y(\mu_x(a,c),b) - y(a,\mu_x(b,c)),
  \end{align*}
  which coincides with $(\delta_x y)(a,b,c)$ by the defining formula above.  Hence
  $\delta_x y = B(x,y)+B(y,x)$.
\end{proof}

\begin{proposition}\label{prop:Leib-tangent=Z2}
  For $x\in \Leib(V)$, the Zariski tangent space is naturally isomorphic to the space of Leibniz
  $2$-cocycles
  \[
    T_x\Leib(V) \cong Z^2_{\mathrm{Leib}}(V,V):=\ker(\delta_x).
  \]
  Under the $\GL(V)$-action on $V^{2,1}$, the orbit tangent at $x$ identifies with the
  $2$-coboundaries
  \[
    T_x(\GL(V) \cdot x)
    \cong B^2_{\mathrm{Leib}}(V,V)
    := \mathrm{im}(\delta_x\colon C^1\to C^2),
  \]
  and the tangent of the quotient stack is Leibniz cohomology
  \[
    T_{[x]}[\Leib(V)/\GL(V)]
    \cong H^2_{\mathrm{Leib}}(V,V)
    := Z^2_{\mathrm{Leib}}(V,V)/B^2_{\mathrm{Leib}}(V,V).
  \]
\end{proposition}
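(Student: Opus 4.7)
The plan is to establish the three isomorphisms in order: the first by combining Proposition~\ref{prop:Leib-diagonal} with Lemma~\ref{lem:Leib-delta-B}, the second by directly differentiating the transport-of-structure action of $\GL(V)$, and the third by the standard tangent description of a quotient stack.

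For the identification $T_x\Leib(V) \cong Z^2_{\mathrm{Leib}}(V,V)$, Proposition~\ref{prop:Leib-diagonal} realizes the Zariski tangent space as $\{y \in V^{2,1} : B(x,y) + B(y,x) = 0\}$, and Lemma~\ref{lem:Leib-delta-B} identifies this symmetric combination with $\delta_x y$. Combining these two statements yields $T_x\Leib(V) = \ker(\delta_x) = Z^2_{\mathrm{Leib}}(V,V)$.

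For the orbit tangent, I would differentiate the transport-of-structure action $(g\cdot\mu_x)(a,b) = g\,\mu_x(g^{-1}a, g^{-1}b)$ at $g = \id$. Writing $g_t = \id + tf + O(t^2)$ with $f \in \End(V) = C^1(V,V)$ and expanding to first order in $t$ gives
\[
  \dot\mu(a,b) = f\mu_x(a,b) - \mu_x(fa,b) - \mu_x(a,fb),
\]
which coincides with the Loday--Pirashvili degree-$1$ coboundary $(\delta_x f)(a,b)$ up to an overall sign. Since the image of a linear map is insensitive to sign and $f$ ranges over all of $C^1(V,V)$, this shows $T_x(\GL(V)\cdot x) = \im(\delta_x\colon C^1\to C^2) = B^2_{\mathrm{Leib}}(V,V)$.

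For the stack tangent, the standard description of the tangent space of a quotient stack $[X/G]$ at a point $x$ is the cokernel $T_x X / T_x(G\cdot x)$ of the infinitesimal action. Applied with $X = \Leib(V)$ and $G = \GL(V)$, the two preceding computations give $T_{[x]}[\Leib(V)/\GL(V)] \cong Z^2_{\mathrm{Leib}}(V,V)/B^2_{\mathrm{Leib}}(V,V) = H^2_{\mathrm{Leib}}(V,V)$. The principal technical care required is matching sign and coefficient conventions between the derivative of the $\GL(V)$-action and the Loday--Pirashvili coboundary $\delta_x\colon C^1 \to C^2$, so that the orbit tangent is literally the image of $\delta_x$ and not merely a subspace isomorphic to it; beyond this bookkeeping no substantial obstacle arises.
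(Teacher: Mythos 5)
Your proposal is correct and follows essentially the same route as the paper: it identifies $T_x\Leib(V)$ with $\ker(\delta_x)$ via Proposition~\ref{prop:Leib-diagonal} and Lemma~\ref{lem:Leib-delta-B}, differentiates the transport-of-structure action along $g_t=\id+tf$ to obtain $\pm(\delta_x f)$ and hence the orbit tangent as $\mathrm{im}(\delta_x)$, and concludes with the standard quotient-stack tangent description. The sign discrepancy you flag is handled identically in the paper (the linear term there is $-(\delta_x f)$, and the image is unchanged), so no further work is needed.
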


\begin{proof}
  By Proposition~\ref{prop:Leib-diagonal} and Lemma~\ref{lem:Leib-delta-B}, we have
  \[
    T_x\Leib(V)
    = \{y\in C^2(V,V) : \delta_x y = 0 \}
    = Z^2_{\mathrm{Leib}}(V,V).
  \]
  
  To identify the orbit tangent, consider the action of $G := \GL(V)$ on $V^{2,1}$ by transport of
  structure:
  \[
    (g\cdot\mu)(a,b) := g \mu(g^{-1}a,g^{-1}b).
  \]
  The Lie algebra $\mathfrak{gl}(V)=\End_\kk(V)$ acts by derivation of this formula.  Let
  $f\in C^1(V,V) = \End_\kk(V)$ and consider a first-order deformation $g_t = \id + t f + O(t^2)$.
  Then
  \[
    (g_t\cdot\mu_x)(a,b)
    = \mu_x(a,b) + t \bigl( f(\mu_x(a,b)) - \mu_x(f(a),b) - \mu_x(a,f(b)) \bigr) + O(t^2).
  \]
  The linear term is $-(\delta_x f)(a,b)$, where $\delta_x\colon C^1\to C^2$ is the Leibniz
  coboundary in degree $1$.  Thus the tangent directions coming from the $G$-orbit are exactly the
  $2$-coboundaries:
  \[
    T_x(\GL(V) \cdot x) = \mathrm{im}(\delta_x\colon C^1\to C^2)=B^2_{\mathrm{Leib}}(V,V).
  \]

  Finally, the tangent space to the quotient stack $[\Leib(V)/\GL(V)]$ at $[x]$ is, by general
  deformation theory, the tangent space of deformations modulo those induced by the group action.
  This is precisely the quotient
  \[
    Z^2_{\mathrm{Leib}}(V,V)/B^2_{\mathrm{Leib}}(V,V) = H^2_{\mathrm{Leib}}(V,V).
  \]
  For background on Leibniz cohomology and its deformation-theoretic interpretation, see
  Loday--Pirashvili \cite{LodayPirashvili1993} and Loday \cite{Loday1993}.
\end{proof}

\subsection{Canonical cocycles and the Killing form}\label{sec:Leib-cocycles}

For a fixed $x\in\Leib(V)$, write $L_a(b) := \mu_x(a,b)$ and $R_a(b) := \mu_x(b,a)$ for left and
right multiplication operators on $V$.  The right Leibniz identity is equivalent to the statement
that each $R_c$ is a derivation of $\mu_x$ in the first variable:
\[
  R_c(\mu_x(a,b)) = \mu_x(R_c(a),b) + \mu_x(a,R_c(b)),
\]
that is,
\[
  \mu_x(\mu_x(a,b),c) = \mu_x(\mu_x(a,c),b) + \mu_x(a,\mu_x(b,c)).
\]

The {\em Leibniz kernel} $\Leib(V_x)$ is the subspace spanned by the squares
$\mu_x(a,a)$~\cite{Loday1993,Cuvier1994}.  In the right Leibniz case one directly checks that this
subspace lies in the right annihilator.  Indeed, for any $a,x\in V$ we have
\[
  \mu_x(\mu_x(x,a),a) = \mu_x(\mu_x(x,a),a) + \mu_x(x,\mu_x(a,a))
\]
by the Leibniz identity with $(a,b,c)=(x,a,a)$, which forces $\mu_x(x,\mu_x(a,a)) = 0$.  Hence
\[
  R_{\mu_x(a,a)} = 0  \qquad\text{for all }a\in V,
\]
and every element of the Leibniz kernel has zero right multiplication.

Define the {\em right modular character}
\[
  \sigma_R\colon V\to \kk,\qquad \sigma_R(a) := \Tr(R_a),
\]
and similarly the left modular character $\sigma_L(a) := \Tr(L_a)$.  For the trivial
one-dimensional bimodule $\kk$ (with zero left and right actions), the Leibniz coboundary in degree
$1$ takes the form
\[
  (\delta\sigma_R)(a,b) = -\sigma_R(\mu_x(a,b)).
\]
Using the derivation property of $R_c$ and the right Leibniz identity, one checks
\[
  R_{\mu_x(a,b)} = -[R_a,R_b],
\]
so that
\[
  (\delta\sigma_R)(a,b) = -\Tr(R_{\mu_x(a,b)}) = \Tr([R_a,R_b]) = 0.
\]
Hence $\sigma_R$ is a Leibniz $1$-cocycle.  An analogous argument using the relation
\[
  [R_b,L_a] = L_{\mu_x(a,b)}
\]
shows that $\sigma_L$ is a $1$-cocycle as well.  These give $\GL(V)$-equivariant maps
$\Leib(V)\to V^\ast$ whose zero loci cut out the right and left unimodular subvarieties.

We now consider the Killing form defined by Ayupov and Omirov in~\cite{AyupovOmirov1998} for
Leibniz algebras that governs the semisimple Lie stratum. They defined the Killing form as a
symmetric bilinear form
\[
  \kappa_R\colon V\times V\to \kk,\qquad \kappa_R(a,b):=\Tr(R_a R_b),
\]
with Gram matrix $D_x = (\Tr(R_{e_i}R_{e_j}))_{i,j}$ in the basis $(e_i)$.  A short computation with
structure constants shows
\[
  D_x(i,j) = \sum_{q,r} x^{r}_{q,i}x^{q}_{r,j}.
\]
For $g\in \GL(V)$, the right multiplication operators transform by conjugation,
$R_{g\cdot a} = g R_a g^{-1}$, and therefore
\[
  D_{g\cdot x} = g^{-t}D_x g^{-1},\qquad \det D_{g\cdot x}=(\det g)^{-2}\det D_x.
\]

\begin{theorem}\label{thm:Killing-Leibniz}
  Assume $\kk$ has characteristic $0$ and $x\in \Leib(V)$.  Then $\det D_x\neq 0$, equivalently
  $\kappa_R$ is nondegenerate, if and only if $(V,\mu_x)$ is a semisimple Lie algebra.
\end{theorem}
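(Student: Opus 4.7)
The plan is to use the Leibniz kernel to reduce the statement to the classical Cartan criterion. The key observation preceding the theorem shows that every square $\mu_x(a,a)$ satisfies $R_{\mu_x(a,a)}=0$, and therefore the Leibniz kernel $\Leib(V_x)=\Span\{\mu_x(a,a):a\in V\}$ is contained in the radical of $\kappa_R$: whenever $R_z=0$ one has $\kappa_R(z,b)=\Tr(R_z R_b)=0$ for all $b\in V$, and by symmetry $\kappa_R(b,z)=0$ as well.

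For the forward direction, assume $\kappa_R$ is nondegenerate. Then $\Leib(V_x)=0$, so $\mu_x(a,a)=0$ for every $a\in V$. Polarizing in characteristic $\neq 2$ yields antisymmetry $\mu_x(a,b)+\mu_x(b,a)=0$, and a standard rearrangement of the right Leibniz identity using antisymmetry produces the Jacobi identity (this is the well-known fact that an antisymmetric right Leibniz algebra over a field of characteristic $0$ is a Lie algebra). Once $(V,\mu_x)$ is a Lie algebra, $R_a(b)=\mu_x(b,a)=-\mathrm{ad}_a(b)$, so $R_aR_b=\mathrm{ad}_a\mathrm{ad}_b$ and
\[
  \kappa_R(a,b) = \Tr(R_aR_b) = \Tr(\mathrm{ad}_a\mathrm{ad}_b) = \kappa(a,b)
\]
is the classical Killing form. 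Cartan's criterion in characteristic $0$ then gives that $(V,\mu_x)$ is semisimple.

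For the converse, if $(V,\mu_x)$ is a semisimple Lie algebra then antisymmetry is already built in, the identification $\kappa_R=\kappa$ of the previous paragraph applies verbatim, and Cartan's criterion supplies nondegeneracy of $\kappa$, hence of $\kappa_R$. The main obstacle, modest as it is, lies in the forward direction: one must be confident that vanishing of the linear span of squares really produces full antisymmetry (which is immediate via polarization since $\kk$ has characteristic $0$) and that antisymmetry plus the right Leibniz identity collapses to Jacobi. After these two routine checks the rest of the argument is bookkeeping that translates right multiplication on the Leibniz side into the adjoint representation on the Lie side and invokes the classical Cartan criterion.
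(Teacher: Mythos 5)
Your proposal is correct and follows essentially the same route as the paper: squares lie in the radical of $\kappa_R$, nondegeneracy forces them to vanish, polarization gives skew-symmetry, the right Leibniz identity then collapses to Jacobi, and Cartan's criterion finishes both directions. The only cosmetic difference is the normalization of the Killing form --- you take $\mathrm{ad}_a=L_a$ and get $\kappa_R=\kappa$ on the nose, whereas the paper uses $\mathrm{ad}(a)=L_a-R_a=2L_a$ and obtains $\kappa_R=\tfrac14\kappa_x$; either convention is immaterial for nondegeneracy.
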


\begin{proof}
  Assume first that $\kappa_R$ is nondegenerate.  As observed above, for any $a\in V$ the square
  $\mu_x(a,a)$ lies in the right annihilator, so $R_{\mu_x(a,a)} = 0$.  For any $b\in V$,
  \[
    \kappa_R\bigl(\mu_x(a,a),b\bigr) = \Tr\bigl(R_{\mu_x(a,a)}R_b\bigr) = 0,
  \]
  so every square $\mu_x(a,a)$ lies in the radical of $\kappa_R$.  Nondegeneracy therefore forces
  \[
    \mu_x(a,a) = 0 \qquad\text{ for all } a\in V.
  \]
  Since $\mu_x$ is bilinear and $\kk$ has characteristic 0, this implies skew-symmetry.  Indeed,
  \[
    0 = \mu_x(a + b , a + b)
    = \mu_x(a,b) + \mu_x(b,a)
  \]
  for all $a,b$, so $\mu_x(b,a) = -\mu_x(a,b)$.

  Thus $(V,\mu_x)$ is a Lie algebra.  On the Lie locus we have $L_a(b) = \mu_x(a,b)$ and
  $R_a(b) = \mu_x(b,a) = -\mu_x(a,b) = -L_a(b)$, so $R_a = -L_a$ and
  \[
    \kappa_R(a,b) = \Tr(R_a R_b) = \Tr(L_a L_b).
  \]
  The Killing form $\kappa_x$ of the Lie algebra $V_x=(V,\mu_x)$ is defined by
  \[
    \kappa_x(a,b) := \Tr(\mathrm{ad}(a)\mathrm{ad}(b)),
    \qquad
    \mathrm{ad}(a) = L_a-R_a.
  \]
  On the Lie locus we have $\mathrm{ad}(a) = 2L_a$, so
  \[
    \kappa_x(a,b) = \Tr(4L_aL_b) = 4 \kappa_R(a,b),
  \]
  and $\kappa_R = \tfrac14\kappa_x$.  Cartan established that the Killing form is nondegenerate if
  and only if the Lie algebra is semisimple.  Thus nondegeneracy of $\kappa_R$ forces $(V,\mu_x)$
  to be a semisimple Lie algebra.

  Conversely, if $V_x$ is a semisimple Lie algebra then its Killing form $\kappa_x$ is
  nondegenerate, and the preceding relation shows that $\kappa_R = \tfrac14\kappa_x$ is also
  nondegenerate.  Hence $\det D_x\neq 0$.
\end{proof}

Define
\[
  U_{\Leib} := \{ x\in \Leib(V) : \det D_x\neq 0 \}.
\]
By Theorem~\ref{thm:Killing-Leibniz}, $U_{\Leib}$ is exactly the locus of semisimple Lie brackets
on $V$.  It is $\GL(V)$-stable and contained in the Lie subvariety $\Lie(V)\subset \Leib(V)$, and
it can only be Zariski-dense in $\Leib(V)$ if $\Leib(V)$ coincides with the Lie locus.

\subsection{Rigidity on the semisimple Lie stratum}

On the Lie locus, Leibniz cohomology with coefficients in the adjoint module identifies with
Chevalley--Eilenberg cohomology.  More precisely, if $\mu_x$ is skew-symmetric and satisfies
Jacobi, then the Leibniz coboundary $\delta_x$ on skew-symmetric cochains coincides with the
Chevalley--Eilenberg differential $d_{\mathrm{CE}}$.  In particular, for $x\in \Lie(V)$ we have
\[
  Z^2_{\mathrm{Leib}}(V,V)_{\mathrm{skew}}\cong Z^2_{\mathrm{CE}}(V_x,V_x),
  \quad
  B^2_{\mathrm{Leib}}(V,V)_{\mathrm{skew}}\cong B^2_{\mathrm{CE}}(V_x,V_x),
\]
and the corresponding quotient identifies with $H^2_{\mathrm{CE}}(V_x,V_x)$.

If $V_x$ is semisimple, Whitehead’s Lemma (\cite[Chap.~XII]{CartanEilenberg1956}) gives
\[
  H^2_{\mathrm{CE}}(V_x,V_x) = 0.
\]
Thus for $x\in U_{\Leib}$ the stack tangent
\[
  T_{[x]}[\Leib(V)/\GL(V)] \cong H^2_{\mathrm{Leib}}(V,V)
\]
vanishes, and semisimple Lie points are formally rigid as Leibniz brackets.

\subsection{The Lie quotient and rigidity}\label{sec:Lie-quotient-rigidity}

Inside $\Leib(V)$, impose skew-symmetry via the linear ideal
\[
  I_{\mathrm{skew}} := \langle x^\ell_{i,j} + x^\ell_{j,i}\rangle
  \subset \kk[x_{i,j}^\ell]
\]
and define
\[
  \Lie(V) := \mathrm{V}(I_{\mathrm{skew}}) \cap \Leib(V).
\]
On $\mathrm{V}(I_{\mathrm{skew}})$, the right Leibniz identity is equivalent to the Jacobi
identity, so $\Lie(V)$ parameterizes Lie brackets on $V$.

For $x\in \Lie(V)$, let $V_x = (V,\mu_x)$ and consider the operators $L_a,R_a$ as before.  On the
Lie locus we have $R_a = -L_a$ and the Killing form $\kappa_R(a,b) = \Tr(R_aR_b)$ coincides with
$\tfrac14$ of the usual Killing form.  Let $D_x = (\Tr(R_{e_i}R_{e_j}))_{i,j}$ be the associated
Gram matrix and set
\[
  U_{\Lie} := \{ x\in \Lie(V) : \det D_x \neq 0 \}.
\]
By Theorem~\ref{thm:Killing-Leibniz}, $U_{\Lie}$ is exactly the locus of semisimple Lie brackets on
$V$.

\begin{proposition}\label{prop:tangent-Lie}
  For $x\in \Lie(V)$, we have canonical isomorphisms
  \[
    T_x\Lie(V) \cong Z^2_{\mathrm{CE}}(V_x,V_x),\qquad
    T_x(\GL(V) \cdot x) \cong B^2_{\mathrm{CE}}(V_x,V_x),
  \]
  and
  \[
    T_{[x]}[\Lie(V)/\GL(V)] \cong H^2_{\mathrm{CE}}(V_x,V_x).
  \]
\end{proposition}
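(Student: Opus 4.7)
The plan is to realize $\Lie(V)$ as a linear slice of $\Leib(V)$ and transfer the cohomological identifications from Proposition~\ref{prop:Leib-tangent=Z2} across that slice. First I would use the defining equality $\Lie(V) = \mathrm{V}(I_{\mathrm{skew}}) \cap \Leib(V)$, where $\mathrm{V}(I_{\mathrm{skew}}) \subset V^{2,1}$ is the linear subspace of skew-symmetric bilinear maps, i.e.\ $C^2_{\mathrm{skew}}(V,V) := \Hom(\Lambda^2 V, V)$. Because this subspace is cut out by linear equations, its Zariski tangent at every point is itself, so for $x \in \Lie(V)$ one obtains
\[
  T_x \Lie(V) = T_x\Leib(V) \cap C^2_{\mathrm{skew}}(V,V) = Z^2_{\mathrm{Leib}}(V,V) \cap C^2_{\mathrm{skew}}(V,V),
\]
by Proposition~\ref{prop:Leib-tangent=Z2}.

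Next I would invoke the observation already recorded at the start of Section~\ref{sec:Lie-quotient-rigidity}: on the Lie locus, the right Leibniz coboundary $\delta_x$ restricted to skew cochains coincides with the Chevalley--Eilenberg differential $d_{\mathrm{CE}}$. A direct expansion of the formula $\delta_x y = B(x,y) + B(y,x)$ from Lemma~\ref{lem:Leib-delta-B}, using $\mu_x(a,b) = -\mu_x(b,a)$ and the skew-symmetry of $y$, collapses the six terms into the standard CE cocycle expression $(d_{\mathrm{CE}} y)(a,b,c) = \sum_{\mathrm{cyc}} \mu_x(a, y(b,c)) - y(\mu_x(a,b), c)$. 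Intersecting the resulting cocycle condition with $C^2_{\mathrm{skew}}$ yields the first isomorphism $T_x\Lie(V) \cong Z^2_{\mathrm{CE}}(V_x, V_x)$.

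For the orbit tangent, transport of structure preserves skew-symmetry, so $\GL(V) \cdot x \subset C^2_{\mathrm{skew}}$. The derivative of the orbit map at $x$ computed inside the proof of Proposition~\ref{prop:Leib-tangent=Z2} sends $f \in \mathfrak{gl}(V) = C^1(V,V)$ to $-\delta_x f$; when $\mu_x$ is skew, this image automatically lies in $C^2_{\mathrm{skew}}$ and matches the CE $2$-coboundary $(a,b) \mapsto f(\mu_x(a,b)) - \mu_x(f(a),b) - \mu_x(a,f(b))$, giving $T_x(\GL(V) \cdot x) \cong B^2_{\mathrm{CE}}(V_x, V_x)$. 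The stack tangent then follows formally as the quotient $Z^2_{\mathrm{CE}}/B^2_{\mathrm{CE}} = H^2_{\mathrm{CE}}(V_x, V_x)$, by the same general principle used in the Leibniz case.

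The main obstacle is justifying the first displayed equality at the scheme-theoretic rather than merely set-theoretic level: one must verify that at a skew-symmetric Jacobi bracket the Leibniz quadrics $G^m_{i,j,k}$, when restricted to the linear subspace $C^2_{\mathrm{skew}}$, generate the Jacobi ideal with its reduced structure rather than an infinitesimally thickened one. In characteristic zero this reduces to a symmetrization/antisymmetrization decomposition of the cubic $G^m_{i,j,k}$ in its three lower indices, exploiting that the symmetric component of the right Leibniz identity on skew brackets vanishes identically while the antisymmetric component is exactly Jacobi, so the naive intersection of tangent spaces coincides with the scheme-theoretic tangent space.
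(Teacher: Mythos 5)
Your proposal is correct and follows essentially the same route as the paper: restrict the identifications of Proposition~\ref{prop:Leib-tangent=Z2} to the linear subspace of skew-symmetric cochains and observe that on the Lie locus the restricted Leibniz coboundary coincides with the Chevalley--Eilenberg differential. The scheme-theoretic worry in your final paragraph is moot for the statement as given, since $\Lie(V)$ is \emph{defined} as $\mathrm{V}(I_{\mathrm{skew}})\cap\Leib(V)$, so its Zariski tangent space at $x$ is by definition the common kernel of the differentials of the generators of $I_{\mathrm{skew}}$ together with the Leibniz quadrics, which is exactly $C^2_{\mathrm{skew}}(V,V)\cap T_x\Leib(V)$ with no further reducedness argument required.
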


\begin{proof}
  By Proposition~\ref{prop:Leib-tangent=Z2} we have
  \[
    T_x\Leib(V)\cong Z^2_{\mathrm{Leib}}(V,V)
  \]
  and $T_x(\GL(V) \cdot x)\cong B^2_{\mathrm{Leib}}(V,V)$, with the stack tangent identified with
  $H^2_{\mathrm{Leib}}(V,V)$.  Imposing skew-symmetry corresponds to intersecting with the subspace
  of skew-symmetric cochains.  On this subspace, the Leibniz differential $\delta_x$ coincides with
  the Chevalley--Eilenberg differential $d_{\mathrm{CE}}$ for the Lie algebra $V_x$. This is an
  immediate consequence of the definitions once one uses skew-symmetry and Jacobi to rewrite the
  Leibniz identity in Lie form.  Hence
  \[
    T_x\Lie(V)\cong Z^2_{\mathrm{Leib}}(V,V)_{\mathrm{skew}}
    \cong Z^2_{\mathrm{CE}}(V_x,V_x),
  \]
  and similarly for coboundaries and cohomology.  The identification of the orbit tangent with
  $B^2_{\mathrm{CE}}$ follows by restricting the argument of Proposition~\ref{prop:Leib-tangent=Z2}
  to skew-symmetric directions, and the stack tangent is again the quotient of cocycles by
  coboundaries.
\end{proof}

\begin{theorem}\label{thm:rigid-ULie}
  Assume $\kk$ has characteristic $0$.  For every point $x\in U_{\Lie}$, we have
  \[
    T_x\Lie(V)=T_x(\GL(V) \cdot x)
    \quad\text{and}\quad
    T_{[x]}[\Lie(V)/\GL(V)] = 0.
  \]
  Equivalently, the $\GL(V)$-orbit of $x$ is Zariski open in $\Lie(V)$ near $x$ and the
  corresponding point in the coarse moduli space $\Lie(V)\sslash\GL(V)$ is isolated.
\end{theorem}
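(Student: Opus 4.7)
The plan is to assemble the theorem from three ingredients already available: the cohomological identification of tangent spaces in Proposition~\ref{prop:tangent-Lie}, the characterization of $U_{\Lie}$ as the semisimple Lie locus in Theorem~\ref{thm:Killing-Leibniz}, and the classical Whitehead Second Lemma. First, I would fix $x\in U_{\Lie}$ and apply Proposition~\ref{prop:tangent-Lie} to rewrite the three tangent spaces of interest as $Z^2_{\mathrm{CE}}(V_x,V_x)$, $B^2_{\mathrm{CE}}(V_x,V_x)$, and $H^2_{\mathrm{CE}}(V_x,V_x)$ respectively. Since $x\in U_{\Lie}$ means $\det D_x\neq 0$, Theorem~\ref{thm:Killing-Leibniz} tells us $V_x$ is a semisimple Lie algebra.

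Next I would invoke Whitehead's Second Lemma, which states that for a finite-dimensional semisimple Lie algebra over a field of characteristic $0$, the Chevalley--Eilenberg cohomology with coefficients in any finite-dimensional module vanishes in degree~$2$. Applied to the adjoint module this gives $H^2_{\mathrm{CE}}(V_x,V_x)=0$, and hence $Z^2_{\mathrm{CE}}(V_x,V_x)=B^2_{\mathrm{CE}}(V_x,V_x)$. Translating back through Proposition~\ref{prop:tangent-Lie}, we obtain
\[
  T_x\Lie(V) = T_x(\GL(V)\cdot x),
  \qquad
  T_{[x]}[\Lie(V)/\GL(V)] = 0,
\]
which is the first half of the statement.

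The geometric consequence is then a standard argument. Since $\GL(V)\cdot x$ is a locally closed smooth subvariety of $\Lie(V)$ and its tangent space at $x$ exhausts $T_x\Lie(V)$, the orbit must be Zariski open in $\Lie(V)$ in a neighborhood of $x$. For isolation in the coarse moduli $\Lie(V)\sslash\GL(V)$, I would invoke Luna's \'etale slice theorem \cite{Luna1973}: a neighborhood of the closed orbit through $x$ in $\Lie(V)$ is \'etale-locally a product of the orbit with a normal slice on which the stabilizer acts, and the GIT quotient is realized as the quotient of the slice by the stabilizer. The vanishing of the stack tangent forces the slice to have trivial tangent space at $x$, so its image in the coarse quotient is a single isolated point.

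The only step with any real content is the appeal to Whitehead's Second Lemma, and this is genuinely classical rather than an obstacle; everything else is bookkeeping built on Proposition~\ref{prop:tangent-Lie} and Theorem~\ref{thm:Killing-Leibniz}. One minor point worth being careful about is that the identification of the orbit tangent with $B^2_{\mathrm{CE}}$ given by Proposition~\ref{prop:tangent-Lie} refers to the full $\GL(V)$-orbit, not to the $\mathrm{PGL}(V)$-orbit, so the argument produces openness of the $\GL(V)$-orbit directly and no further quotienting is needed before passing to Luna slices.
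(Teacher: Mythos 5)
Your proposal is correct and follows essentially the same route as the paper's own proof: identify the tangent spaces via Proposition~\ref{prop:tangent-Lie}, use Theorem~\ref{thm:Killing-Leibniz} to see that $V_x$ is semisimple, apply Whitehead's Lemma to get $H^2_{\mathrm{CE}}(V_x,V_x)=0$ and hence $Z^2_{\mathrm{CE}}=B^2_{\mathrm{CE}}$, and conclude openness of the orbit and isolation in the coarse quotient. Your explicit appeal to Luna's \'etale slice theorem for the isolation step is a slightly more detailed justification than the paper gives here (the paper invokes it only in the analogous associative result), but it is the same argument in substance.
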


\begin{proof}
  If $x\in U_{\Lie}$, then $V_x$ is semisimple.  Whitehead’s
  Lemma~\cite[Chap.~XII]{CartanEilenberg1956} yields
  \[
    H^2_{\mathrm{CE}}(V_x,V_x) = 0.
  \]
  Thus $Z^2_{\mathrm{CE}}(V_x,V_x) = B^2_{\mathrm{CE}}(V_x,V_x)$.
  Proposition~\ref{prop:tangent-Lie} then gives
  \[
    T_x\Lie(V)\cong Z^2_{\mathrm{CE}}(V_x,V_x)
    \cong B^2_{\mathrm{CE}}(V_x,V_x)
    \cong T_x(\GL(V) \cdot x),
  \]
  so the $\GL(V)$-orbit exhausts the tangent space and is therefore open at $x$.  The quotient
  tangent
  \[
    T_{[x]}[\Lie(V)/\GL(V)] \cong H^2_{\mathrm{CE}}(V_x,V_x)
  \]
  vanishes, which shows that the corresponding point in the coarse moduli space has no nontrivial
  first-order deformations and is isolated.
\end{proof}

\begin{corollary}\label{cor:det-open-rigid}
  The open subset $U_{\Lie}$ is a finite union of open $\GL(V)$-orbits, one for each semisimple Lie
  isomorphism class on $V$.  Points of $U_{\Lie}$ are isolated in the coarse moduli space
  $\Lie(V)\sslash \GL(V)$.
\end{corollary}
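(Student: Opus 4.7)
The plan is to assemble the corollary directly from Theorem~\ref{thm:rigid-ULie} together with a short topological finiteness argument. First I would observe that Theorem~\ref{thm:rigid-ULie} gives, for each $x\in U_{\Lie}$, that the orbit $\GL(V)\cdot x$ is Zariski open in $\Lie(V)$ near $x$; since $U_{\Lie}$ is itself open, the orbit is open in $U_{\Lie}$. The $\GL(V)$-orbits partition $U_{\Lie}$, so the complement of any single orbit is the union of the remaining open orbits and is therefore open as well. Hence each orbit is also closed in $U_{\Lie}$, i.e.\ orbits are clopen subsets and in particular unions of connected components of $U_{\Lie}$.

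Next I would invoke finiteness. Because $\Lie(V)$ is a closed subscheme of the affine space $V^{2,1}$, it is a Noetherian topological space, and so is its open subset $U_{\Lie}$. A Noetherian space has finitely many connected components, so by the previous paragraph $U_{\Lie}$ is a finite disjoint union of open $\GL(V)$-orbits. By the definition of the $\GL(V)$-action as transport of structure, two points of $U_{\Lie}$ lie in the same orbit precisely when the corresponding Lie brackets on $V$ are isomorphic as Lie algebras; hence these finitely many open orbits correspond bijectively to isomorphism classes of semisimple Lie algebra structures of dimension $n$ on $V$. (Alternatively one can quote the classical classification, which already bounds the number of semisimple Lie algebras of a fixed dimension over a characteristic-zero field.)

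For the final assertion about the coarse moduli space, I would simply cite the isolation statement of Theorem~\ref{thm:rigid-ULie}: for $x\in U_{\Lie}$ the stack tangent $T_{[x]}[\Lie(V)/\GL(V)]\cong H^2_{\mathrm{CE}}(V_x,V_x)$ vanishes by Whitehead, so the image of $x$ in $\Lie(V)\sslash\GL(V)$ has no first-order deformations and is an isolated closed point.

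There is no real obstacle here; the corollary is essentially a repackaging of Theorem~\ref{thm:rigid-ULie}. The one step that is not entirely formal is the passage from ``each orbit is open'' to ``finitely many orbits,'' and my preferred route is the clopen/Noetherian argument above, which avoids any appeal to the full classification of semisimple Lie algebras and keeps the proof within the geometric framework developed in the paper.
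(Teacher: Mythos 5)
Your proof is correct and follows essentially the same route as the paper: both arguments deduce the openness of each orbit and the isolation of its image in the coarse moduli space directly from Theorem~\ref{thm:rigid-ULie}, and both identify the orbits with semisimple isomorphism classes via transport of structure. The one point where you go beyond the paper is the finiteness of the number of orbits: the paper leaves this implicit (in effect relying on the classification of semisimple Lie algebras of a given dimension), whereas your clopen/Noetherian argument --- open orbits partition the Noetherian space $U_{\Lie}$, hence each is a union of its finitely many connected components --- establishes finiteness intrinsically, without invoking the classification. This is a small but genuine improvement in rigor, and it is the more robust argument over fields that are not algebraically closed, where counting isomorphism classes directly is more delicate.
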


\begin{proof}
  By Theorem~\ref{thm:Killing-Leibniz}, $U_{\Lie}$ is exactly the locus of semisimple Lie brackets.
  Theorem~\ref{thm:rigid-ULie} shows that each such point has an open $\GL(V)$-orbit and vanishing
  stack tangent, so each semisimple isomorphism class determines an open orbit that is isolated in
  the coarse moduli space.  Distinct semisimple Lie algebras are not isomorphic, so these orbits
  are pairwise disjoint.
\end{proof}

For a semisimple Lie algebra $V_x$ we have $\Der(V_x) = \mathrm{ad}(V_x)$ and
$\dim\Der(V_x) = \dim V_x=n$.  The tangent space to the $\GL(V)$-orbit at $x$ has dimension
\[
  \dim T_x(\GL(V) \cdot x)
  = \dim\mathfrak{gl}(V) - \dim\Aut(V_x)
  = n^2 - n.
\]
By Theorem~\ref{thm:rigid-ULie}, $\dim T_x\Lie(V) = n^2 - n$ as well, so $\Lie(V)$ is smooth of
dimension $n^2 - n$ at $x$ and the orbit is open.

\begin{corollary}\label{cor:count-Lie-orbits}
  Assume $\kk$ is algebraically closed of characteristic $0$.  The number of open $\GL(V)$-orbits
  in the semisimple locus $U_{\Lie}\subset \Lie(V)$ is equal to the number of ways to write $n$ as
  a sum of dimensions of simple Lie algebras:
  \[
    N_{\mathrm{Lie}}(n)
    := \#\left\{ (\mathfrak{g}_1,\ldots,\mathfrak{g}_k) : \mathfrak{g}_i \text{ simple},\ \sum_{i=1}^k \dim \mathfrak{g}_i = n\right\} \big/ {\sim},
  \]
  where $(\mathfrak{g}_1,\ldots,\mathfrak{g}_k) \sim (\mathfrak{g}'_1,\ldots,\mathfrak{g}'_{k'})$
  if they represent the same isomorphism class up to reordering.
\end{corollary}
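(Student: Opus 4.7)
The plan is to combine Corollary~\ref{cor:det-open-rigid} with the classical structure theory of semisimple Lie algebras over an algebraically closed field of characteristic zero. First I would recall that by Theorem~\ref{thm:Killing-Leibniz} the locus $U_{\Lie}$ is exactly the set of semisimple Lie brackets on $V$, and by Corollary~\ref{cor:det-open-rigid} it decomposes as a finite disjoint union of open $\GL(V)$-orbits, one for each isomorphism class of $n$-dimensional semisimple Lie algebra structure on $V$. So the counting problem reduces to counting isomorphism classes of semisimple Lie algebras of dimension exactly $n$ over $\kk$.

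Next I would invoke the classical structure theorem: over a field of characteristic zero, every finite-dimensional semisimple Lie algebra is isomorphic to a direct sum $\mathfrak{g}_1\oplus\cdots\oplus\mathfrak{g}_k$ of simple Lie algebras, and this decomposition is unique up to reordering of the summands. Taking dimensions gives $\sum_{i=1}^k \dim \mathfrak{g}_i = n$. This yields a bijection between isomorphism classes of $n$-dimensional semisimple Lie algebras and unordered tuples $(\mathfrak{g}_1,\ldots,\mathfrak{g}_k)$ of simple Lie algebras whose dimensions sum to $n$, which is precisely the set whose cardinality defines $N_{\mathrm{Lie}}(n)$.

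To confirm finiteness of this count, I would appeal to the Cartan--Killing classification: over an algebraically closed field of characteristic zero the simple Lie algebras are exhausted by the four infinite series $A_\ell$, $B_\ell$, $C_\ell$, $D_\ell$ and the exceptional types $E_6$, $E_7$, $E_8$, $F_4$, $G_2$, each with a known dimension. Only finitely many of these have dimension at most $n$, so the number of admissible unordered tuples is finite and the corollary follows.

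The only substantive step is the invocation of the uniqueness of the decomposition into simple summands; the remainder is a direct translation of the bijection between open orbits in $U_{\Lie}$ and isomorphism classes established in Corollary~\ref{cor:det-open-rigid}. In particular, no new geometric input beyond what has already been proved is required, so there is no genuine obstacle — just careful bookkeeping to ensure that unordered tuples (rather than ordered ones) are counted, matching the equivalence relation $\sim$ in the statement.
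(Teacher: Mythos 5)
Your proposal is correct and follows essentially the same route as the paper: reduce via Corollary~\ref{cor:det-open-rigid} to counting isomorphism classes of $n$-dimensional semisimple Lie algebras, then invoke the unique decomposition into simple summands to identify these with unordered tuples of simple factors whose dimensions sum to $n$. The extra remark on finiteness via the Cartan--Killing classification is a harmless addition that the paper leaves implicit.
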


\begin{proof}
  Over an algebraically closed field of characteristic $0$, every semisimple Lie algebra $V_x$ of
  dimension $n$ decomposes uniquely as a direct sum of simple Lie algebras
  \[
    V_x\cong \bigoplus_{i=1}^k \mathfrak g_i.
  \]
  Isomorphism classes of semisimple Lie algebras of dimension $n$ are in bijection with unordered
  tuples $(\mathfrak g_1,\dots,\mathfrak g_k)$ of simple factors whose dimensions sum to $n$.  By
  Corollary~\ref{cor:det-open-rigid}, each such isomorphism class corresponds to a unique open
  $\GL(V)$-orbit in $U_{\Lie}$, and distinct isomorphism classes yield disjoint orbits.  This gives
  the stated count.
\end{proof}

\begin{remark}
  The number of open orbits in the Lie case for small dimensions is tabulated in \cite[A178930]{OEIS}:
  \[
    N_{\mathrm{Lie}}(1), \ldots, N_{\mathrm{Lie}}(20) = 0,0,1,0,0,1,0,1,1,1,1,1,1,2,2,2,2,3,2,3.
  \]
  The initial values are governed by the small list of simple Lie dimensions (e.g.\ $3$ for
  $\mathfrak{sl}_2$, $8$ for $\mathfrak{sl}_3$, $10$ for $\mathfrak{so}_5$, $14$ for
  $\mathfrak{g}_2$, $15$ for $\mathfrak{sl}_4$) and their finite sums.  For instance,
  $N_{\mathrm{Lie}}(14) = 2$ because one has either $\mathfrak{g}_2$ or
  $\mathfrak{sl}_3\oplus \mathfrak{sl}_2^{\oplus 2}$, and $N_{\mathrm{Lie}}(15)=2$ because one has
  either $\mathfrak{sl}_4$ or $\mathfrak{sl}_2^{\oplus 5}$.
\end{remark}


\end{document}